\newtheorem{theorem}{Theorem}[section]
\newtheorem{lemma}[theorem]{Lemma}
\theoremstyle{definition}
\newtheorem{definition}[theorem]{Definition}
\newtheorem{remark}[theorem]{Remark}
\newcommand{\norm}[1]{\left\Vert#1\right\Vert}
\numberwithin{equation}{section}
\begin{document}
\font\nho=cmr10
\def\dive{\mathrm{div}}
\def\cal{\mathcal}
\def\L{\cal L}

\def \ud{\underline }
\def\id{{\indent }}
\def\f{\frac}
\def\non{{\noindent}}
 \def\le{\leqslant} 
 \def\leq{\leqslant}
 \def\geq{\geqslant} 
\def\rar{\rightarrow}
\def\Rar{\Rightarrow}
\def\ti{\times}
\def\i{\mathbb I}
\def\j{\mathbb J}
\def\si{\sigma}
\def\Ga{\Gamma}
\def\ga{\gamma}
\def\ld{{\lambda}}
\def\Si{\Psi}
\def\f{\mathbf F}
\def\r{\hro{R}}
\def\e{\cal{E}}
\def\B{\cal B}
\def\A{\mathcal{A}}
\def\p{\mathbb P}

\def\tet{\theta}
\def\Tet{\Theta}
\def\hro{\mathbb}
\def\ho{\mathcal}
\def\P{\ho P}
\def\E{\mathcal{E}}
\def\n{\mathbb{N}}
\def\M{\mathbb{M}}
\def\dMu{\mathbf{U}}
\def\dMcs{\mathbf{C}}
\def\dMcu{\mathbf{C^u}}
\def\vk{\vskip 0.2cm}
\def\td{\Leftrightarrow}
\def\df{\frac}
\def\Wei{\mathrm{We}}
\def\Rey{\mathrm{Re}}
\def\s{\mathbb S}
\def\l{\mathcal{L}}
\def\C+{C_+([t_0,\infty))}
\def\o{\cal O}

\title[AAP-Solutions of INSE]{On Asymptotically Almost Periodic Solutions of the parabolic-elliptic Keller-Segel system on real hyperbolic Manifolds}

\author[T.V. Thuy]{Tran Van Thuy}
\address{Tran Van Thuy\hfill\break
East Asia University of Technology, Trinh Van Bo, Nam Tu Liem, Hanoi, Vietnam}
\email{thuyhum@gmail.com or thuytv@eaut.edu.vn}

\author[N.T. Van]{Nguyen Thi Van}
\address{Nguyen Thi Van \hfill\break
Faculty of Computer Science and Engineering, Thuyloi University, 175 Tay Son, Dong Da, Hanoi,
Vietnam}
\email{van@tlu.edu.vn}

\author[P.T. Xuan]{Pham Truong Xuan}
\address{Pham Truong Xuan \hfill\break
Thang Long Institute of Mathematics and Applied Sciences (TIMAS), Thang Long University,
Nghiem Xuan Yem, Hoang Mai, Hanoi, Vietnam.} 
\email{phamtruongxuan.k5@gmail.com or xuanpt@thanglong.edu.vn} 

\begin{abstract}  
In this article, we investigate the existence, uniqueness and exponential decay of asymptotically almost periodic solutions of the parabolic-elliptic Keller-Segel system on a real hyperbolic manifold. We  prove the existence and uniqueness of such solutions in the linear equation case by using the dispersive and smoothing estimates of the heat semigroup. Then we pass to the well-posedness of {the} semi-linear equation case by using the results of linear equation and fixed point arguments. The exponential decay is proven by using Gronwall's inequality. 
\end{abstract}

\subjclass[2020]{35A01, 35B10, 35B65, 35Q30, 35Q35, 76D03, 76D07}

\keywords{Keller-Segel system,  Dispersive estimates, Smoothing estimates, Asymptotically almost periodic solutions, Well-posedness}

\maketitle


\section{Introduction}
We consider the  Keller–Segel model on the real hyperbolic space $\mathbb{H}^n \,\, (n \geqslant 2)$ described by the following equations:
\begin{equation}\label{KSH} 
\left\{
  \begin{array}{rll}
u_t \!\! &= \Delta_{\mathbb{H}^n}u - \chi\nabla_x \cdot (u\nabla_x v) + g(t) \quad  &x\in  \mathbb{H}^n,\, t \in \r_+, \hfill \cr
-\Delta_{\mathbb{H}^n} v + \gamma v \!\!&=\; \alpha u \quad &x\in \mathbb{H}^n,\,  t \in \r_+,\cr
u(0) &= u_0(x) \quad &x\in \mathbb{H}^n,
\end{array}\right.
\end{equation}
where the operator $\Delta_{\mathbb{H}^n}$ means Laplace-Beltrami operator on $\mathbb{H}^n$; $u(t,x): \mathbb{R}_+\times \mathbb{H}^n \to \mathbb{R}_+$ is the density of cells; $v(t,x): \mathbb{R}_+\times \mathbb{H}^n \to \mathbb{R}_+$ is the concentration of the chemoattractant; $\chi$ is the positive sensitivity parameter; $\gamma \geqslant 0$ and $\alpha >0$ represent the decay and production rate of the chemoattractant, respectively. Assume that the given function $g(t)$ is asymptotically almost periodic.

The original model  was contributed by Keller and Segel (see \cite{KeSe}). The ideal for the model {is} based on biological phenomena describing chemotaxis (aggregation of organisms sensitive to a gradient of a chemical substance). For forty years, because of significant applications in biology, various versions of the model have been widely studied.  In the Euclidean space $\mathbb{R}^n$ when $ n=2$ some crucial results are obtained by A.Blanchet, J. Dolbeault, L. Corrias,  (see \cite{Bla, Do, CoEs}) and when $ n \geq 3$, a large series of results are from the deep works of Chen, Ferreira, Hieber, Iwabuchi, Konozo, Lee, Sugiyama, Yahagi,  Wang (see  \cite{Li2023, Qi2017, Chen2018, Ko1, Ko2, Fe2021, Hi2020, Iwa2011}). Furthermore, on the hyperbolic manifold  $\mathbb{H}^2$, Pierfelice and Maheux  recently proved the local and global well-posedness results under the sub-critical condition in \cite{MaPi}. Xuan et al. (in \cite{Xu}) continued to prove  the existence and unique  periodic mild solution of the Keller-Segel system on the whole spaces,  both  the Euclidean space $\mathbb{R}^n , n\geq 4$   and  the hyperbolic space $ \mathbb{H}^n, n \geq 2$.

One should also recall that the concept of almost periodicity  was firstly introduced by Bohr in the mid-twenties (see \cite{Bo1, Bo2, Bo3}). Afterwards, the theory of almost periodic functions was continuously developed  by some mathematicians like Amerio and Prouse \cite{Ame}, Levitan \cite{Le}, Besicovitch \cite{Be}, Bochner\cite{Boc}, Neumann, Fr\'echet, Pontryagin, Lusternik, Stepanov, Weyl, etc. (see \cite{Che, Dia}). The concept of asymptotically almost periodicity was introduced in the literature by the French mathematician Fr\'echet. The works of known authors dedicated to asymptotically almost periodicity of solutions of differential equations that were introduced in the book \cite{Dia}. Concerning the asymptotically almost periodic solution of equations in fluid dynamic systems, in recent works \cite{XVQ,XV}, Xuan and his collaborations have answered fully the problems on the existence, uniqueness and exponential stability of such type solutions for Navier-Stokes equations in $L^p(\mathbb H^n)$ spaces for all $1<p$. The method is based on dispersive estimates of vectorial heat equations, {the} Massera-type principle and fixed point arguments. These works \cite{XVQ, XV}, in fact, provided a positive answer for the existence of asymptotically almost periodic mild solution of Navier-Stokes equations in the framework of manifolds with negative Ricci curvature, meanwhile this existence is still an open question for Navier-Stokes equations in Euclidean spaces and its domains. In this paper, we extend \cite{XVQ, XV} to study the asymptotically almost periodic mild solutions for the above Keller-Segel system \eqref{KSH}.

In particular, we first consider inhomogeneous linear system corresponding to system \eqref{KSH}:
\begin{equation}\label{KSH0} 
\left\{
  \begin{array}{rll}
u_t \!\! &= \Delta_{\mathbb{H}^n}u - \chi\nabla_x \cdot (\omega\nabla_x v) + g(t) \quad  &x\in  \mathbb{H}^n,\, t \in \r_+, \hfill \cr
-\Delta_{\mathbb{H}^n} v + \gamma v \!\!&=\; \alpha \omega \quad &x\in \mathbb{H}^n,\,  t \in \r_+,\cr
u(0) &= u_0(x) \quad &x\in \mathbb{H}^n,
\end{array}\right.
\end{equation}
for a given function $\omega \in C_b(\mathbb{R}_+, L^{\frac{p}{2}}(\mathbb{H}^n))$ (where ${\max \left\lbrace 3,n\right\rbrace }<p<2n$). We establish that the system \eqref{KSH0} has a unique mild solution in $C_b(\mathbb{R}_+, L^{\frac{p}{2}}(\mathbb{H}^n))$. Moreover, we also prove that the system \eqref{KSH0} on the whole line time-axis (without initial data $u_0$) has a unique mild solution in $C_b(\r, L^{\frac{p}{2}}(\mathbb{R}^n))$ for a given $\omega \in C_b(\mathbb{R}, L^{\frac{p}{2}}(\mathbb{H}^n))$ (see Lemma \ref{Thm:linear}). Hence, we obtain a Massera-type principle that if $g$ and $\omega$ are asymptotically almost periodic functions in suitable spaces then the system \eqref{KSH0} has a unique asymptotically almost periodic mild solution (see Theorem \ref{pest}). Combining the results of linear Keller-Segel system and fixed point arguments, we establish the well-posedness of asymptotically almost periodic mild solutions for semi-linear system \eqref{KSH} in the small ball of $C_b(\r_+,L^{\frac{p}{2}}({\mathbb{H}}^n))$.
In addition, the exponential decay of such solutions will be obtained by using Gronwall's inequality (see Theorem \ref{thm2.20}).

Our paper is organized in three sections. In Section \ref{S2}, we give some preliminaries about asymptotically almost periodic ones to \eqref{KSH} in the real hyperbolic space $\mathbb{H}^n$. In Section \ref{S3}, we deal with the existence and uniqueness of almost periodicity and asymptotically almost periodicity to the linear system { corresponding} to  \eqref{KSH}.  Then, the paper ends with Section \ref{S4} in proving the existence, uniqueness and exponential decay  of asymptotically almost periodic mild solutions to the semi-linear  system \eqref{KSH}. \\

\section{Preliminaries}\label{S2}
\subsection{ The concepts of functions}
\bigskip
Let $X$ be a Banach space, we denote 
$$C_b(\r, X):=\{f:\r \to X \mid f\hbox{ is continuous on $\r$ and }\sup_{t\in\r}\|f(t)\|_X<\infty\}$$
which is a Banach space endowed with the norm $\|f\|_{\infty, X}=\|f\|_{C_b(\r, X)}:=\sup\limits_{t\in\r}\|f(t)\|_X$.
Similarly, we denote 
$$C_b(\r_+, X):=\{f:\r_+ \to X \mid f\hbox{ is continuous on ${\r_+}$ and }\sup_{t\in\r_+}\|f(t)\|_X<\infty\}$$
which is also a  Banach space endowed with the norm $\|f\|_{\infty, X}=\|f\|_{C_b(\r_+, X)}:=\sup\limits_{t\in\r_+}\|f(t)\|_X$.

\begin{definition}(Bohr \cite{Bo1,Bo2,Bo3})
A function  $h \in C_b(\r, X )$ is called almost periodic function if for each $ \epsilon  > 0$, there exists $l_{\epsilon}>0 $ such that every interval of length $l_{\epsilon}$ contains at least a number $T $ with the following property
\begin{equation}
 \sup_{t \in \r } \| h(t+T)  - h(t) \| < \epsilon.
\end{equation}
The collection of all almost periodic functions $h:\r \to X $ will be denoted by $AP(\r,X)$ which is a Banach space endowed with the norm $\|h\|_{ AP(\r,X)}=\sup_{t\in\r}\|h(t)\|_X.$
\end{definition}
To introduce the asymptotically almost periodic functions, we need the space  $C_0 (\r_+,X)$, that is, the collection of all continuous functions $\varphi: \r_+ \to X$ such that
 $$\mathop{\lim}\limits_{t \to \infty } \| \varphi(t) \|=0.$$
Clearly, $C_0 (\r_+,X)$  is a Banach space endowed with the norm $\|\varphi\|_{C_0 (\r_+,X)}=\sup_{t\in\r_+}\|\varphi(t)\|_X$.
\begin{definition} 
A continuous function  $f \in C(\r_+, X )$  is said to be asymptotically almost periodic if there exist  $h \in AP(\r,X)$ and $ \varphi\in C_0(\r_+,X)$ such that
\begin{equation}
f(t) = h(t) + \varphi(t).
\end{equation}
We denote $AAP(\r_+, X):= \{f:\r_+ \to X \mid f\hbox{ is asymptotically almost periodic on $\r_+$}\}$. Under the norm $\|f\|_{ AAP(\r_+,X)}=\|h\|_{ AP(\r, X)}+\|\varphi\|_{ C_0(\r_+,X)}$, then $AAP(\r_+,X)$ is a Banach space.
\end{definition} 
\begin{remark}
This definition is { arised from} the origin one of Fr\'echet: A function $f\in C_b( [0,\infty),X)$ is said to be asymptotically almost periodic if and only if,
for every $\epsilon>0$, we can find numbers $l > 0$ and $M > 0$ such that every subinterval of $[0,\infty)$ of length $l$ contains, at least, one number $\tau$ such that $|f(t + \tau) - f (t)|\leq \epsilon$ for all $t\geq M$.
\end{remark}
The decomposition of asymptotically almost periodic functions is unique \rm( see \cite[Proposition 3.44, page 97]{Dia}), that is, 
$$AAP(\r_+,X) = AP(\r,X) \oplus C_0(\r_+,X).$$
The relation of extension between the above Banach spaces can be showed  as
$$P(\r,X) \longrightarrow AP(\r,X) \longrightarrow AAP(\r_+,X) \longrightarrow C_b(\r,X).$$
where  $P(\r,X)$ is the space of all  periodic functions from  $\r$ to $X$.\\
{\bf Example.}
 The function $h(t)=\sin{ t}+\sin({\sqrt{2}t})$ is almost periodic but not periodic and also  $f(t) =\sin{ t}+\sin({\sqrt{2}t})+e^{-t}$  is asymptotically almost periodic but not almost periodic.

\subsection{The real hyperbolic manifold} 
 Let $(\mathbb{H}^{n},{\mathfrak g})= (\mathbb{H}^{n}(\mathbb{R}),{\mathfrak g})$ stand for a real hyperbolic manifold, where $n\geqslant 2$ is the dimension, endowed with a Riemannian metric ${\mathfrak g}$. This space is realized via a  hyperboloid in $\mathbb{R}^{n+1}$ by considering the upper sheet
$$
\left\{  (x_{0},x_{1},...,x_{n})\in\mathbb{R}^{n+1};\text{ }\,x_{0}\geq1\text{
and }x_{0}^{2}-x_{1}^{2}-x_{2}^{2}...-x_{n}^{2}=1\,\right\},$$
where the metric is given by $d{\mathfrak g} =-dx_{0}^{2}+dx_{1}^{2}+...+dx_{n}^{2}.$

In geodesic polar coordinates, the hyperbolic manifold $(\mathbb{H}^{n},{\mathfrak g})$ can be described as
$$
\mathbb{H}^{n}=\left\{  (\cosh\tau,\omega\sinh\tau),\,\tau\geq0,\omega
\in\mathbb{S}^{n-1}\right\}
$$
with $d{\mathfrak g}=d\tau^{2} +(\sinh\tau)^{2}d\omega^{2},$ where $d\omega^{2}$ is the
canonical metric on the sphere $\mathbb{S}^{n-1}$. In these coordinates, the
Laplace-Beltrami operator $\Delta_{\mathbb{H}^n}$ on $\mathbb{H}^{n}$ can be
expressed as
$$
\Delta_{\mathbb{H}^n}=\partial_{r}^{2}+(n-1)\coth r\partial
_{r}+\sinh^{-2}r\Delta_{\mathbb{S}^{n-1}}.
$$
It is well known that the spectrum of $-\Delta_{\mathbb{H}^n}$ is the half-line
$\left[\dfrac{(n-1)^2}{4},\infty \right)$.

\subsection{The Keller-Segel system on real hyperbolic manifolds} \label{2_3}
We consider the Keller-Segel system on $(\mathbb{H}^n,{\mathfrak g})$, where $n \geqslant 2$ 
\begin{equation}\label{KS} 
\left\{
  \begin{array}{rll}
u_t \!\! &= {\Delta_{\mathbb{H}^n}} u - \chi\nabla_x \cdot (u\nabla_x v) + g(t) \quad  &x\in  \mathbb{H}^n, t \in \r,\hfill \cr
- {\Delta_{\mathbb{H}^n}} v + \gamma v \!\!&=\; \alpha u \quad &x\in \mathbb{H}^n, t \in \r.
\end{array}\right.
\end{equation}
For the sake of convenience, we assume that the constant $\chi=1$. Moreover, for the simplicity of calculations we study the case $g(t)= \nabla_x \cdot f(t,\cdot)$.

The second equation of this system leads to $v= \alpha (- {\Delta_{\mathbb{H}^n}}+ \gamma I)^{-1}u$.
Therefore, the Keller-Segel system on $(\mathbb{H}^n,{\mathfrak g})$ is rewritten as
\begin{equation}\label{smKS1} 
u_t =  {\Delta_{\mathbb{H}^n}} u + \nabla_x \cdot \left[ - \alpha u\nabla_x (- {\Delta_{\mathbb{H}^n}}+ \gamma I)^{-1}u + f\right] \quad  x\in  \mathbb{H}^n,\, t \in \r. 
\end{equation}

Setting $L_j = \partial_j(- {\Delta_{\mathbb{H}^n}} + \gamma I)^{-1}$, the properties of this operator is given in the following lemma (see \cite[Lemma 4.1]{Fe2021} and \cite[Lemma 3.3]{Pi}):
\begin{lemma}\label{invertEs}
Let $\gamma\geqslant 0$, $n\geqslant 2$, $1<p<n$ and $\frac{1}{q}=\frac{1}{p}-\frac{1}{n}$.
The operator $L_j$ is continuous from $L^{p}(\mathbb{H}^n)$ to $L^{q}(\mathbb{H}^n)$, for each $j=1,2...n$. Moreovver, there exists a constant $C>0$ independent of $f$ and $\gamma$ satisfying 
\begin{equation}
\norm{L_jf}_{L^{q}} \leqslant C{k(\gamma)}\norm{f}_{L^{p}},
\end{equation}
where ${k(0)}=1$ and ${k(\gamma)}=\gamma^{-(n-1)}$ if $\gamma>0$.
\end{lemma}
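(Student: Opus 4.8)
The plan is to reduce the statement to two classical facts about real hyperbolic space --- the critical Sobolev inequality and the $L^q$-boundedness of Riesz transforms --- which is essentially the content of \cite[Lemma 4.1]{Fe2021} and \cite[Lemma 3.3]{Pi}. Since the spectrum of $-\Delta_{\mathbb{H}^n}$ is contained in $[\lambda_0,\infty)$ with $\lambda_0=\tfrac{(n-1)^2}{4}>0$, the operator $-\Delta_{\mathbb{H}^n}+\gamma I$ is boundedly invertible on $L^2(\mathbb{H}^n)$ for every $\gamma\geqslant 0$ and its fractional powers $(-\Delta_{\mathbb{H}^n}+\gamma I)^{-s}$ are well defined. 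I would factor
\[
L_j=\partial_j(-\Delta_{\mathbb{H}^n}+\gamma I)^{-1}=\left(\partial_j(-\Delta_{\mathbb{H}^n}+\gamma I)^{-1/2}\right)\circ(-\Delta_{\mathbb{H}^n}+\gamma I)^{-1/2}=:R_j\circ S,
\]
and reduce the lemma to showing that $S\colon L^p(\mathbb{H}^n)\to L^q(\mathbb{H}^n)$ is bounded when $\tfrac1q=\tfrac1p-\tfrac1n$, and that $R_j\colon L^q(\mathbb{H}^n)\to L^q(\mathbb{H}^n)$ is bounded for $1<q<\infty$; the claimed inequality then follows by composition.

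For the boundedness of $S$ I would start from the subordination identity $(-\Delta_{\mathbb{H}^n}+\gamma I)^{-1/2}=\tfrac{1}{\sqrt{\pi}}\int_0^\infty t^{-1/2}e^{-\gamma t}e^{t\Delta_{\mathbb{H}^n}}\,dt$ (first on $L^2$, then extended by density) together with the heat-semigroup estimates on $\mathbb{H}^n$: for $1<r\leqslant s<\infty$ one has $\norm{e^{t\Delta_{\mathbb{H}^n}}}_{L^r\to L^s}\leqslant C\,t^{-\frac n2\left(\frac1r-\frac1s\right)}$ for $0<t\leqslant 1$ (the manifold being, at the heat-kernel level, Euclidean-like for short times) and $\norm{e^{t\Delta_{\mathbb{H}^n}}}_{L^r\to L^s}\leqslant C\,e^{-\delta t}$ for $t\geqslant 1$ with some $\delta>0$ coming from the spectral gap. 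At the endpoint relation $\tfrac1q=\tfrac1p-\tfrac1n$ the resulting $t$-integral is borderline non-integrable near $t=0$, exactly as in the Euclidean Sobolev embedding, so one cannot conclude by a naive $L^1_t$ bound; instead one proves the weak-type $(p,q)$ estimate and upgrades it by the Marcinkiewicz interpolation theorem, i.e. runs the Hardy--Littlewood--Sobolev argument adapted to $\mathbb{H}^n$. This is the step I expect to be the main obstacle, and it is exactly what is imported from the cited references. The boundedness of $R_j$ on $L^q(\mathbb{H}^n)$ for $1<q<\infty$ is the known $L^q$-boundedness of the (shifted) Riesz transform $\nabla_x(-\Delta_{\mathbb{H}^n}+\gamma I)^{-1/2}$ on real hyperbolic space, uniformly in $\gamma\geqslant 0$.

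Finally, to obtain the explicit dependence on $\gamma$, I would keep the factor $e^{-\gamma t}$ throughout the computation. When $\gamma=0$ the spectral gap alone makes every time integral converge at infinity, which gives the inequality with $k(0)=1$. When $\gamma>0$, splitting the $t$-integral at $t=1$ and estimating the near-origin piece by the Euclidean-type smoothing and the tail by $\int_1^\infty e^{-\gamma t}e^{-\delta t}\,dt$, the additional decay carried by $e^{-\gamma t}$ produces a gain that can be absorbed into $\gamma^{-(n-1)}$ times a dimensional constant; alternatively one may use the resolvent identity $(-\Delta_{\mathbb{H}^n}+\gamma I)^{-1}=(-\Delta_{\mathbb{H}^n})^{-1}\left(I-\gamma(-\Delta_{\mathbb{H}^n}+\gamma I)^{-1}\right)$ with $\norm{\gamma(-\Delta_{\mathbb{H}^n}+\gamma I)^{-1}}_{L^q\to L^q}\leqslant 1$. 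Combining the two regimes would then yield the bound $\norm{L_jf}_{L^q}\leqslant C\,k(\gamma)\norm{f}_{L^p}$ with $C$ independent of $f$ and $\gamma$.
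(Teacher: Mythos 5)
The first thing to say is that the paper contains no proof of this lemma at all: it is stated as a quoted result, with the parenthetical reference to \cite[Lemma 4.1]{Fe2021} and \cite[Lemma 3.3]{Pi} serving as the entire justification. So there is no internal argument to compare yours against. Judged on its own, your skeleton --- factoring $L_j=\bigl(\partial_j(-\Delta_{\mathbb{H}^n}+\gamma I)^{-1/2}\bigr)\circ(-\Delta_{\mathbb{H}^n}+\gamma I)^{-1/2}$, bounding the second factor $L^p\to L^q$ by subordination, short-time Euclidean-like smoothing plus the spectral gap, with a weak-type/Marcinkiewicz argument at the critical relation $\tfrac1q=\tfrac1p-\tfrac1n$, and invoking $L^q$-boundedness of the (shifted) Riesz transform --- is the standard route to such estimates, and you correctly identify the endpoint integrability failure as the delicate step. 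Deferring that step and the Riesz transform bound to the literature is no worse than what the paper itself does.

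The genuine gap is the $\gamma$-dependence, which is the only part of the statement your sketch purports to supply beyond citation, and it is not actually derived. Keeping the factor $e^{-\gamma t}$ only helps the large-time piece of the subordination integral; the small-time piece, which is exactly the borderline contribution at $\tfrac1q=\tfrac1p-\tfrac1n$, gains nothing of order $\gamma^{-(n-1)}$ from it. Indeed, on $\mathbb{R}^n$ the operator $\partial_j(-\Delta+\gamma)^{-1}$ at this scale-invariant exponent pair has $L^p\to L^q$ norm independent of $\gamma$ (pure scaling), and since the short-time heat kernel on $\mathbb{H}^n$ is Euclidean-like --- a point you yourself use --- no soft argument of the kind you describe can produce decay in $\gamma$. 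Your fallback via the resolvent identity, with $\norm{\gamma(-\Delta_{\mathbb{H}^n}+\gamma I)^{-1}}_{L^q\to L^q}\leqslant 1$, likewise yields only a bound uniform in $\gamma\geqslant 0$. A uniform bound does imply the stated inequality when $\gamma\leqslant 1$, since then $\gamma^{-(n-1)}\geqslant 1$, but for $\gamma>1$ the claimed $k(\gamma)=\gamma^{-(n-1)}$ is strictly stronger than uniformity, and nothing in your two suggested mechanisms delivers it; if that power of $\gamma$ is to be obtained at all, it must be extracted from the precise resolvent-kernel estimates in the cited lemmas, not from absorbing $e^{-\gamma t}$ or the resolvent identity. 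So either carry out the kernel computation far enough to exhibit the claimed $k(\gamma)$, or do as the paper does and import the statement wholesale without pretending to reprove that part.
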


The dispersive and smoothing estimates of heat semigroup on hyperbolic space are well
studied in the literature for hyperbolic spaces and various types of non-compact manifolds for both the Laplace-Beltrami and the Hodge-Laplacian {operators} (see \cite{Anker1,Auscher,Bakry,Co,Da,Gri,Pi,Va, Vaz} and many others). Here, we will use the estimates of Pierfelice.
\begin{lemma}(\cite[Theorem 4.1 and Corollary 4.3]{Pi})\label{estimates}
\begin{itemize}
\item[(i)] For $t>0$, and $p$, $q$ such that $1\leq p \leq q \leq \infty$, 
the following dispersive estimate holds: 
\begin{equation}\label{dispersive}
\left\| e^{t {\Delta_{\mathbb{H}^n}}}u_0\right\|_{L^q(\mathbb{H}^n)} \leq [h_n(t)]^{\frac{1}{p}-\frac{1}{q}}e^{-t( \gamma_{p,q})}\left\|u_0\right\|_{L^p(\mathbb{H}^n)} 
\end{equation}
for all $u_0 \in L^p(\mathbb{H}^n,\mathbb{R})$, where 
 $$h_n(t) = \tilde{C}\max\left( \frac{1}{t^{n/2}},1 \right),\, 
   \gamma_{p,q}=\frac{\delta_n}{2}\left[ \left(\frac{1}{p} - \frac{1}{q} \right) + \frac{8}{q}\left( 1 - \frac{1}{p} \right) \right]$$ 
and $\delta_n$ is a positive constant depending only on $n$.  
\item[(ii)] For $t>0$, and $p,q$ such that $1\leqslant p\leqslant q \leqslant\infty$, the following estimate holds:
\begin{equation}
\left\|  {\nabla_x\cdot e^{t\Delta_{\mathbb{H}^n}}} V_0 \right\|_{L^q(\mathbb{H}^n)} \leqslant [h_n(t)]^{\frac{1}{p}-\frac{1}{q}+\frac{1}{n}}e^{-t\left( \frac{\gamma_{q,q}+\gamma_{p,q}}{2} \right)} \left\|V_0\right\|_{L^p(\mathbb{H}^n)}
\end{equation}
for all vector field $V_0 \in L^p(\mathbb{H}^n)$. The functions $h_n(t)$ and $\gamma_{p,q}$ are defined as in Assertion (i).
\end{itemize}
\end{lemma}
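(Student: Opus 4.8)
The plan is to derive both estimates from pointwise heat-kernel bounds on $\mathbb{H}^n$ together with interpolation, rather than to reprove them from scratch. Recall that $e^{t\Delta_{\mathbb{H}^n}}$ acts as convolution on the symmetric space $\mathbb{H}^n$ against the radial heat kernel $p_t(r)$, $r=d(x,y)$, and that one has the sharp Gaussian-type estimates (Davies--Mandouvalos; Anker--Ostellari; see e.g. \cite{Da,Gri,Anker1})
\[
p_t(r)\ \asymp\ t^{-n/2}\,(1+r)\,(1+r+t)^{\frac{n-3}{2}}\,\exp\big(-\tfrac{(n-1)^2}{4}t-\tfrac{n-1}{2}r-\tfrac{r^2}{4t}\big).
\]
Two building blocks come out of this. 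First, the ultracontractivity-type bound $\norm{e^{t\Delta_{\mathbb{H}^n}}}_{L^1\to L^\infty}\le C\,h_n(t)$: the shape $\max(t^{-n/2},1)$ of $h_n$ reflects the Euclidean behaviour at small times, while at large times the polynomial factors $(1+r)(1+r+t)^{(n-3)/2}$ and the Gaussian decay in $r$ are absorbed into the subexponential function $h_n$ (at the price of the exponential rate). Second, the diagonal bounds: since $\mathbb{H}^n$ is stochastically complete one has $\norm{e^{t\Delta_{\mathbb{H}^n}}}_{L^1\to L^1}=\norm{e^{t\Delta_{\mathbb{H}^n}}}_{L^\infty\to L^\infty}=1$, while the spectral gap $\mathrm{spec}(-\Delta_{\mathbb{H}^n})=[(n-1)^2/4,\infty)$ gives $\norm{e^{t\Delta_{\mathbb{H}^n}}}_{L^2\to L^2}=e^{-t(n-1)^2/4}$; interpolating these two (and using duality) yields $\norm{e^{t\Delta_{\mathbb{H}^n}}}_{L^p\to L^p}\le C\,e^{-t\gamma_{p,p}}$ with $\gamma_{p,p}=\frac{\delta_n}{2}\cdot\frac8p\big(1-\frac1p\big)$, which vanishes at $p=1,\infty$ and is maximal at $p=2$, as the formula for $\gamma_{p,q}$ demands.

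Next I would interpolate. Given $L^1\to L^1$, $L^1\to L^\infty$, $L^\infty\to L^\infty$, and $L^p\to L^p$ estimates with the decay rates above, Riesz--Thorin (or Stein's complex interpolation) along a path joining these endpoints produces the full $L^p\to L^q$ family; tracking the interpolation parameter $\theta$ linearly through the powers of $h_n(t)$ and through the exponential rates yields simultaneously the factor $[h_n(t)]^{\frac1p-\frac1q}$ and the affine-in-$(\frac1p,\frac1q)$ expression for $\gamma_{p,q}$. The somewhat unusual coefficient $\frac8q$ in $\gamma_{p,q}$ is an artefact of choosing an interpolation path that keeps the bookkeeping transparent rather than optimising the rate; any choice producing a genuine positive lower bound on the intermediate rates suffices, and one only has to verify $\gamma_{p,q}\ge 0$ and the correct endpoint values, which is elementary.

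For assertion (ii) I would use the semigroup law to factor $\nabla_x\cdot e^{t\Delta_{\mathbb{H}^n}}=\big(\nabla_x\cdot e^{\frac t2\Delta_{\mathbb{H}^n}}\big)\circ e^{\frac t2\Delta_{\mathbb{H}^n}}$. The first factor is handled by the gradient heat-kernel bound: differentiating the Gaussian in $r$ costs one power $s^{-1/2}=[h_n(s)]^{1/n}$ in the small-time regime, giving $\norm{\nabla_x\cdot e^{s\Delta_{\mathbb{H}^n}}}_{L^q\to L^q}\le C\,[h_n(s)]^{1/n}e^{-s\gamma_{q,q}}$; the second factor is bounded by part (i) with exponents $p\to q$. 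Multiplying the two (with $s=t/2$) and using that $h_n$ is slowly varying, so $h_n(t/2)\le C\,h_n(t)$, gives $\norm{\nabla_x\cdot e^{t\Delta_{\mathbb{H}^n}}V_0}_{L^q}\le C\,[h_n(t)]^{\frac1p-\frac1q+\frac1n}e^{-t(\gamma_{q,q}+\gamma_{p,q})/2}\norm{V_0}_{L^p}$, which is exactly the claimed inequality.

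The main obstacle is the first step: securing the heat-kernel bounds with the correct short-time power $t^{-n/2}$, the exponential rate $e^{-(n-1)^2t/4}$, and the right polynomial corrections, and -- more delicately -- arranging the interpolation so that the combined rate $\gamma_{p,q}$ is a valid lower bound for every intermediate pair $(p,q)$, since the optimal $L^p\to L^q$ decay rate on a rank-one symmetric space is genuinely path-dependent and subtle (the Cowling--Giulini--Meda circle of ideas). The lemma sidesteps the sharp problem by settling for the explicit, non-optimal $\gamma_{p,q}$ above, which is all that the subsequent fixed-point arguments require.
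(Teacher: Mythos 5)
This lemma is not proved in the paper at all: it is quoted verbatim from Pierfelice \cite[Theorem 4.1 and Corollary 4.3]{Pi}, so there is no internal proof to compare against. Your sketch is the standard route and is essentially the one followed in the cited source: for (i), pointwise heat-kernel bounds of Davies--Mandouvalos type give the $L^1\to L^\infty$ bound with the factor $h_n(t)$, the spectral gap at $\frac{(n-1)^2}{4}$ gives the $L^2\to L^2$ decay, and interpolation produces the full $L^p\to L^q$ family; for (ii), the splitting $\nabla_x\cdot e^{t\Delta_{\mathbb{H}^n}}=\bigl(\nabla_x\cdot e^{\frac t2\Delta_{\mathbb{H}^n}}\bigr)e^{\frac t2\Delta_{\mathbb{H}^n}}$ together with a gradient heat-kernel bound and $h_n(t/2)\leq C\,h_n(t)$ reproduces exactly the exponent $\frac1p-\frac1q+\frac1n$ and the rate $\frac{\gamma_{q,q}+\gamma_{p,q}}{2}$, which is how the cited Corollary 4.3 is obtained from the dispersive estimate and the gradient estimate.

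Two caveats are worth flagging. First, the interpolation bookkeeping as you state it does not literally yield the announced formula: Riesz--Thorin between $L^1\to L^1$ (rate $0$) and $L^2\to L^2$ (rate $\delta_n$) gives the diagonal rate $2\delta_n\bigl(1-\frac1p\bigr)$ for $1\leq p\leq 2$, which is strictly smaller than the claimed $\gamma_{p,p}=\frac{4\delta_n}{p}\bigl(1-\frac1p\bigr)$ when $p<2$; this is harmless only because $\delta_n$ in the statement is an unspecified positive constant, so it can be shrunk to make the affine expression a valid lower bound for the interpolated rates, but you should say this explicitly rather than assert that the path ``yields'' the stated $\gamma_{p,q}$. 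Second, assertion (ii) concerns $e^{t\Delta_{\mathbb{H}^n}}$ acting on a vector field followed by the divergence; making this precise requires either the appropriate Laplacian on $1$-forms or a duality argument reducing it to the gradient estimate on functions, a point your sketch (like the paper) passes over silently but which the cited source handles.
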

Using the estimates in Lemma \ref{estimates}, Pierfelice and Maheux proved the well-posedness of local and global mild solutions of \eqref{KSH} on the two-dimensional hyperbolic space $\mathbb{H}^2$ (see \cite{MaPi}). Next, we will show the well-posedness of global mild (asymptotically almost periodic) solutions for all dimensions $n\geqslant 2$.

\section{The existence of asymptotically almost periodic solution to the  linear Keller-Segel equation}\label{S3}
In this section, we concentrate on studying the inhomogeneous  linear equation corresponding to Equation \eqref{smKS1} is
\begin{equation}\label{LinearKS} 
u_t =  {\Delta_{\mathbb{H}^n}} u + \nabla_{x} \cdot \left[ -\alpha \omega\nabla_x (-  {\Delta_{\mathbb{H}^n}} + \gamma I)^{-1}\omega + f\right], \quad  x\in  \mathbb{H}^n,\, t \in\r,
\end{equation}
for a given function $\omega \in C_b(\mathbb{R}, L^{\frac{n}{2}}(\mathbb{H}^n))$.
The $\textit{mild solution}$ on the whole  {line time-axis} can be reformulated by the following integral equation (see \cite{KoNa}):
\begin{equation}\label{mild:linear}
u(t) = \int_{-\infty}^t \nabla_x \cdot e^{(t-s) {\Delta_{\mathbb{H}^n}}}\left[-\alpha \omega\nabla_x(- {\Delta_{\mathbb{H}^n}} + \gamma I)^{-1}\omega + f\right](s)ds.
\end{equation}

In the case of {asymptotically} almost periodic functions, we need to add the initial data $u(0)=u_0$ to the equation \eqref{LinearKS}. That yields the Cauchy problem as
\begin{equation}\label{LKS} 
\left\{
  \begin{array}{rll}
u_t \!\! &= {\Delta_{\mathbb{H}^n}} u +\nabla_{x} \cdot \left[ -\alpha \omega\nabla_x (- {\Delta_{\mathbb{H}^n}} + \gamma I)^{-1}\omega + f\right],  \quad  &x\in  \mathbb{H}^n, t \in \r_+,\hfill \cr
u(0) \!\!&=\; u_0, \quad &x\in \mathbb{H}^n.
\end{array}\right.
\end{equation}
By Duhamel’s principle, the mild solution is given by
\begin{equation}\label{mild:linear*}
u(t) = e^{t {\Delta_{\mathbb{H}^n}} }u_0 + \int_0^t \nabla_x \cdot e^{(t-s) {\Delta_{\mathbb{H}^n}}}\left[ -\alpha \omega\nabla_x(- {\Delta_{\mathbb{H}^n}}+ \gamma I)^{-1}\omega + f \right](s)ds.
\end{equation}
The existence and uniqueness of  the bounded mild solutions of the inhomogeneous linear system \eqref{LinearKS} is established in the following lemma.
 
\begin{lemma}\label{Thm:linear}
Let $n\geq 2$ and ${\max \left\lbrace 3,n\right\rbrace} <p<2n$. The following assertions holds
\begin{itemize}
\item[(i)] For given functions $\omega \in C_b(\mathbb{R}, L^{\frac{p}{2}}(\mathbb{H}^n))$ and $f\in C_b(\r, L^{\frac{p}{3}}(\mathbb{H}^n))$, there exists a unique mild solution of Equation \eqref{LinearKS} satisfying the integral equation \eqref{mild:linear}. Moreover, the following boundedness holds
\begin{equation}\label{boundedness12}
\norm{u(t)}_{L^{\frac{p}{2}}(\mathbb{H}^n)} \leq  \tilde{K}\left( \alpha {k(\gamma)}\norm{\omega}^2_{\infty,L^{\frac{p}{2}}(\mathbb{H}^n)} + \norm{f}_{\infty,L^{\frac{p}{3}}(\mathbb{H}^n)} \right).
\end{equation}
\item[(ii)] For given functions $u_0\in L^{\frac{p}{2}}(\mathbb{H}^n)$, $\omega \in C_b(\mathbb{R}_+, L^{\frac{p}{2}}(\mathbb{H}^n))$ and $f\in C_b(\r_+, L^{\frac{p}{3}}(\mathbb{H}^n))$, there exists a unique mild solution of Equation \eqref{LKS} with the initial data $u_0$ satisfying the integral equation \eqref{mild:linear*}. Moreover, the following boundedness holds
\begin{equation}\label{boundedness1}
\norm{u(t)}_{L^{\frac{p}{2}}(\mathbb{H}^n)} \leq  \norm{u_0}_{L^{\frac{p}{2}}(\mathbb{H}^n)} + \tilde{K}\left( \alpha {k(\gamma)} \norm{\omega}^2_{\infty,L^{\frac{p}{2}}(\mathbb{H}^n)} + \norm{f}_{\infty,L^{\frac{p}{3}}(\mathbb{H}^n)} \right).
\end{equation}
\end{itemize}
\end{lemma}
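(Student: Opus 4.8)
The plan is to handle both assertions directly through the Duhamel integrals \eqref{mild:linear} and \eqref{mild:linear*}, using Lemma~\ref{invertEs} to control the chemotactic coupling term and the smoothing estimate Lemma~\ref{estimates}(ii) for the operator $\nabla_x\cdot e^{t\Delta_{\mathbb{H}^n}}$. Write $F(s):=-\alpha\,\omega(s)\,\nabla_x(-\Delta_{\mathbb{H}^n}+\gamma I)^{-1}\omega(s)+f(s)$ for the bracketed source; the first step is to locate $F(s)$ in a single Lebesgue scale. Lemma~\ref{invertEs}, applied with source exponent $p/2$ (admissible since $1<\tfrac{p}{2}<n$, equivalently $2<p<2n$), puts $\nabla_x(-\Delta_{\mathbb{H}^n}+\gamma I)^{-1}\omega(s)$ in $L^{q_0}(\mathbb{H}^n)$ with $\tfrac{1}{q_0}=\tfrac{2}{p}-\tfrac{1}{n}$ and $\|\cdot\|_{L^{q_0}}\le C\,k(\gamma)\|\omega(s)\|_{L^{p/2}}$. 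Hölder's inequality then gives $\omega(s)\,\nabla_x(-\Delta_{\mathbb{H}^n}+\gamma I)^{-1}\omega(s)\in L^{r}(\mathbb{H}^n)$ with $\tfrac{1}{r}=\tfrac{4}{p}-\tfrac{1}{n}$ and norm $\le C\,k(\gamma)\|\omega(s)\|_{L^{p/2}}^{2}$; one checks $1\le r\le p/2$, the upper bound $r\le p/2$ being precisely a restatement of $p\le 2n$. So $F(s)$ decomposes as a term in $L^{r}$ plus the term $f(s)\in L^{p/3}$, the space $L^{p/3}$ being an honest Banach space because $p>3$.

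Next I would run the $L^{p/2}$ estimate. Applying Lemma~\ref{estimates}(ii) to $\nabla_x\cdot e^{(t-s)\Delta_{\mathbb{H}^n}}$ acting on the $L^{r}$ part gives a factor $[h_n(t-s)]^{\frac1r-\frac2p+\frac1n}=[h_n(t-s)]^{2/p}$, and acting on the $L^{p/3}$ part a factor $[h_n(t-s)]^{\frac3p-\frac2p+\frac1n}=[h_n(t-s)]^{1/p+1/n}$, each multiplied by a decaying exponential with positive rate $\beta_1,\beta_2>0$ (positivity follows from the explicit form of $\gamma_{p,q}$, since $\gamma_{q,q}>0$ for $1<q<\infty$). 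Inserting this into \eqref{mild:linear} gives
\begin{align*}
\|u(t)\|_{L^{p/2}} &\le C\,\alpha\,k(\gamma)\,\|\omega\|_{\infty,L^{p/2}}^{2}\int_{-\infty}^{t}[h_n(t-s)]^{2/p}\,e^{-(t-s)\beta_1}\,ds\\
&\quad{}+\|f\|_{\infty,L^{p/3}}\int_{-\infty}^{t}[h_n(t-s)]^{1/p+1/n}\,e^{-(t-s)\beta_2}\,ds .
\end{align*}
Substituting $\tau=t-s$ makes both integrals independent of $t$, and they are finite: near $\tau=0$ one has $h_n(\tau)\sim \tau^{-n/2}$, so the integrands behave like $\tau^{-n/p}$ and $\tau^{-(\frac{n}{2p}+\frac12)}$ respectively, both integrable exactly because $p>n$; for $\tau\to\infty$, $h_n$ is bounded and the exponentials take care of convergence. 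Taking $\tilde{K}$ to be a constant dominating $C$ times each of these two integral values yields \eqref{boundedness12}. For assertion (ii) the same pointwise bounds over the finite interval $[0,t]$ give the integral part, and the dispersive estimate Lemma~\ref{estimates}(i) with $p=q=\tfrac p2$ gives $\|e^{t\Delta_{\mathbb{H}^n}}u_0\|_{L^{p/2}}\le e^{-t\gamma_{p/2,p/2}}\|u_0\|_{L^{p/2}}\le\|u_0\|_{L^{p/2}}$; adding these produces \eqref{boundedness1}.

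The preceding majorants also settle well-posedness. They show the integral in \eqref{mild:linear} (respectively \eqref{mild:linear*}) converges absolutely in $L^{p/2}(\mathbb{H}^n)$, and dominated convergence together with the strong continuity of the heat semigroup gives $t\mapsto u(t)\in C_b(\mathbb{R},L^{p/2})$ (respectively $C_b(\mathbb{R}_+,L^{p/2})$). For uniqueness in (i): if $u_1,u_2$ are two bounded mild solutions, then splitting the integral \eqref{mild:linear} at an arbitrary $\sigma<t$ and using the semigroup property shows each $u_i$ satisfies $u_i(t)=e^{(t-\sigma)\Delta_{\mathbb{H}^n}}u_i(\sigma)+\int_\sigma^t\nabla_x\cdot e^{(t-s)\Delta_{\mathbb{H}^n}}F(s)\,ds$, hence $w:=u_1-u_2$ obeys $w(t)=e^{(t-\sigma)\Delta_{\mathbb{H}^n}}w(\sigma)$; by Lemma~\ref{estimates}(i), $\|w(t)\|_{L^{p/2}}\le e^{-(t-\sigma)\gamma_{p/2,p/2}}\|w\|_{\infty,L^{p/2}}\to 0$ as $\sigma\to-\infty$, so $w\equiv0$. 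For (ii), uniqueness is immediate: the difference of two solutions of \eqref{mild:linear*} with the same $u_0,\omega,f$ equals $e^{t\Delta_{\mathbb{H}^n}}\cdot 0=0$.

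I expect the only genuine obstacle to be the exponent bookkeeping, which must be arranged so that four constraints hold simultaneously: Lemma~\ref{invertEs} applies ($1<\tfrac p2<n$); the Hölder exponent satisfies $1\le r\le p/2$; both powers of $h_n$ stay strictly below $\tfrac2n$ so the time singularity at $s=t$ is integrable; and the exponential rates coming from $\gamma_{p,q}$ are strictly positive so the integral over $(-\infty,t]$ converges. Each of these is equivalent to one of $p>3$, $p>n$, $p<2n$, which is exactly the content of the hypothesis $\max\{3,n\}<p<2n$; once the bookkeeping is in place, the continuity and uniqueness arguments are the standard ones for mild solutions on the line.
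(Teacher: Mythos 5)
Your proposal is correct and follows essentially the same route as the paper's proof: the same Hölder splitting of the bilinear term (your $r=\tfrac{pn}{4n-p}$ and $q_0=\tfrac{pn}{2n-p}$ are exactly the exponents used there), the same application of Lemma~\ref{invertEs} with source exponent $p/2$, and the same smoothing exponents $[h_n]^{2/p}$ and $[h_n]^{1/p+1/n}$ with the convolution integrals bounded via Gamma functions. Your additional remarks on continuity and uniqueness only flesh out points the paper leaves implicit, so there is nothing to correct.
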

\begin{proof} 
$(i)$ By a utilization of Lemma \ref{estimates} and the boundedness of $L_j = \partial_j(-\Delta_{\mathbb{H}^n} + \gamma I)^{-1}$, we have
\begin{eqnarray*}
\norm{u(t)}_{L^{\frac{p}{2}}(\mathbb{H}^n)} &\leq&  \alpha \int_{-\infty}^t \norm{\nabla_x \cdot e^{(t-s) {\Delta_{\mathbb{H}^n}}}\left[\omega\nabla_x(- {\Delta_{\mathbb{H}^n}} + \gamma I)^{-1}\omega \right](s)}_{L^{\frac{p}{2}}(\mathbb{H}^n)}ds\cr
&&+ \int_{-\infty}^t \norm{\nabla_x \cdot e^{(t-s) {\Delta_{\mathbb{H}^n}}}f(s)}_{L^{\frac{p}{2}}(\mathbb{H}^n)}ds\cr
&\leq&  \alpha \int_{-\infty}^t [h_n(t-s)]^{\frac{2}{p}}e^{-(t-s)\beta}\norm{\left[ \omega\nabla_x(- {\Delta_{\mathbb{H}^n}} + \gamma I)^{-1}\omega \right](s)}_{L^{\frac{pn}{4n-p}}(\mathbb{H}^n)}ds\cr
&&+ \int_{-\infty}^t [h_n(t-s)]^{\frac{1}{p}+\frac{1}{n}}e^{-(t-s)\hat\beta}\norm{f(s)}_{L^{\frac{p}{3}}(\mathbb{H}^n)}ds\cr
&\leq&  \alpha \int_{-\infty}^t [h_n(t-s)]^{\frac{2}{p}}e^{-(t-s)\beta}\norm{\omega(s)}_{L^{\frac{p}{2}}(\mathbb{H}^n)}  \norm{\left[\nabla_x(- {\Delta_{\mathbb{H}^n}} + \gamma I)^{-1}\omega \right](s)}_{L^{\frac{pn}{2n-p}}(\mathbb{H}^n)}ds\cr
&&+ \int_{-\infty}^t [h_n(t-s)]^{\frac{1}{p}+\frac{1}{n}}e^{-(t-s)\hat\beta}\norm{f(s)}_{L^{\frac{p}{3}}(\mathbb{H}^n)}ds\cr
&\leqslant& \int_{-\infty}^t \left( (t-s)^{-\frac{n}{p}} +1 \right) e^{-(t-s)\beta}ds \left( \alpha \tilde C^{\frac{2}{p}} C {k(\gamma)}\norm{\omega}^2_{\infty, L^{\frac{p}{2}}(\mathbb{H}^n))} \right)\cr
&&+ \int_{-\infty}^t \left( (t-s)^{-\frac{1}{2}-\frac{n}{2p}} +1 \right) e^{-(t-s)\hat\beta}ds \left(\tilde C^{\frac{1}{p}+\frac{1}{n}} \norm{f}_{\infty,L^{\frac{p}{3}}(\mathbb{H}^n)}\right)\cr
&\leq& \int_0^\infty \left( z^{-\frac{n}{p}} +1 \right) e^{-z\beta}dz \left( \alpha \tilde C^{\frac{2}{p}} C {k(\gamma)}\norm{\omega}^2_{\infty, L^{\frac{p}{2}}(\mathbb{H}^n))} \right)\cr
&&+ \int_0^\infty \left( z^{-\frac{1}{2}-\frac{n}{2p}} +1 \right) e^{-z\hat\beta}dz \left(\tilde C^{\frac{1}{p}+\frac{1}{n}} \norm{f}_{\infty,L^{\frac{p}{3}}(\mathbb{H}^n)}\right)\cr
&\leq&  \left( \frac{1}{\beta^{1- \frac{n}{p}}}\Gamma\left(1- \frac{n}{p} \right) + \frac{1}{\beta} \right)\left(\alpha \tilde C^{\frac{2}{p}} C{k(\gamma)}  \norm{\omega}^2_{\infty, L^{\frac{p}{2}}(\mathbb{H}^n)} \right)\cr
&&+  \left(\frac{1}{\hat \beta^{\frac{1}{2} - \frac{n}{2p}}}\Gamma\left( \frac{1}{2} - \frac{n}{2p} \right) + \frac{1}{\hat \beta} \right) \tilde C^{\frac{1}{p}+\frac{1}{n}} \norm{f}_{\infty,L^{\frac{p}{3}}(\mathbb{H}^n)} \cr
&\leqslant& \tilde{K}\left(\alpha {k(\gamma)}   \norm{\omega}^2_{\infty,L^{\frac{p}{2}}(\mathbb{H}^n)} + \norm{f}_{\infty,L^{\frac{p}{3}}(\mathbb{H}^n)}\right),
\end{eqnarray*}
 where  $\beta=\dfrac{\gamma_{p/2,p/2}+\gamma_{pn/(4n-p),p/2}}{2}$,\; $\hat \beta=\dfrac{\gamma_{p/2,p/2}+\gamma_{p/3,p/2}}{2}$, $$\tilde{K}=\max\left\lbrace  \left( \frac{1}{\beta^{1- \frac{n}{p}}}\Gamma\left(1- \frac{n}{p} \right) + \frac{1}{\beta} \right)  \tilde C^{\frac{2}{p}} C, \left( \frac{1}{\hat\beta^{\frac{1}{2} - \frac{n}{2p}}}\Gamma\left( \frac{1}{2} -\frac{n}{2p} \right) + \frac{1}{\hat\beta}\right) \tilde C^{\frac{1}{p}+\frac{1}{n}}\right\rbrace,$$ and  $\mathit{\mathbf{\Gamma}}$ means the Gamma function.\\
 
$(ii)$ By the same way in the proof of Assertion $(i)$, we estimate that 
\begin{eqnarray*}
\norm{u(t)}_{L^{\frac{p}{2}}(\mathbb{H}^n)}\!\!\!\!\!\! &\leq& \!\!\! \norm{u_0}_{L^{\frac{p}{2}}(\mathbb{H}^n)}   + \alpha \int_0^t \norm{\nabla_x \cdot e^{(t-s) {\Delta_{\mathbb{H}^n}}}\left[ \omega\nabla_x(- {\Delta_{\mathbb{H}^n}} + \gamma I)^{-1}\omega \right](s)}_{L^{\frac{p}{2}}(\mathbb{H}^n)}ds\cr
&&\!\!\!+ \int_0^t \norm{\nabla_x \cdot e^{(t-s) {\Delta_{\mathbb{H}^n}}}f(s)}_{L^{\frac{p}{2}}(\mathbb{H}^n)}ds\cr
&\leq&\!\!\! \norm{u_0}_{L^{\frac{p}{2}}(\mathbb{H}^n)} + \alpha \int_0^t [h_n(t-s)]^{\frac{2}{p}}e^{-(t-s)\beta}\norm{\left[ \omega\nabla_x(- {\Delta_{\mathbb{H}^n}} + \gamma I)^{-1}\omega \right](s)}_{L^{\frac{pn}{4n-p}}(\mathbb{H}^n)}ds\cr
&&\!\!\!+ \int_0^t [h_n(t-s)]^{\frac{1}{p}+\frac{1}{n}}e^{-(t-s)\hat\beta}\norm{f(s)}_{L^{\frac{p}{3}}(\mathbb{H}^n)}ds\cr
&\leq&\!\!\! \norm{u_0}_{L^{\frac{p}{2}}(\mathbb{H}^n)} + \alpha \int_0^t [h_n(t-s)]^{\frac{2}{p}}e^{-(t-s)\beta}\norm{\omega(s)}_{L^{\frac{p}{2}}(\mathbb{H}^n)}  \norm{\left[\nabla_x(- {\Delta_{\mathbb{H}^n}} + \gamma I)^{-1}\omega \right](s)}_{L^{\frac{pn}{2n-p}}(\mathbb{H}^n)}ds\cr
&&\!\!\!+ \int_0^t [h_n(t-s)]^{\frac{1}{p}+\frac{1}{n}}e^{-(t-s)\hat\beta}\norm{f(s)}_{L^{\frac{p}{3}}(\mathbb{H}^n)}ds\cr
&\leq&\!\!\! \norm{u_0}_{L^{\frac{p}{2}}(\mathbb{H}^n)} + \int_0^t \left( (t-s)^{-\frac{n}{p}} +1 \right) e^{-(t-s)\beta}ds \left( \alpha \tilde C^{\frac{2}{p}} C {k(\gamma)}\norm{\omega}^2_{\infty, L^{\frac{p}{2}}(\mathbb{H}^n))} \right)\cr
&&\!\!\!+ \int_0^t \left( (t-s)^{-\frac{1}{2}-\frac{n}{2p}} +1 \right) e^{-(t-s)\hat\beta}ds \left(\tilde C^{\frac{1}{p}+\frac{1}{n}} \norm{f}_{\infty,L^{\frac{p}{3}}(\mathbb{H}^n)}\right)\cr
&\leq&\!\!\! \norm{u_0}_{L^{\frac{p}{2}}(\mathbb{H}^n)} + \left( \frac{1}{\beta^{1- \frac{n}{p}}}\Gamma\left(1- \frac{n}{p} \right) + \frac{1}{\beta} \right)\left(\alpha \tilde C^{\frac{2}{p}} C {k(\gamma)}  \norm{\omega}^2_{\infty, L^{\frac{p}{2}}(\mathbb{H}^n)} \right)\cr
&&\!\!\!+  \left(\frac{1}{\hat \beta^{\frac{1}{2} - \frac{n}{2p}}}\Gamma\left( \frac{1}{2} - \frac{n}{2p} \right) + \frac{1}{\hat \beta} \right) \tilde C^{\frac{1}{p}+\frac{1}{n}} \norm{f}_{\infty,L^{\frac{p}{3}}(\mathbb{H}^n)} \cr
&\leq&\!\!\! \norm{u_0}_{L^{\frac{p}{2}}(\mathbb{H}^n)} + \int_0^\infty \left( z^{-\frac{n}{p}} +1 \right) e^{-z\beta}dz \left( \alpha \tilde C^{\frac{2}{p}} C {k(\gamma)}\norm{\omega}^2_{\infty, L^{\frac{p}{2}}(\mathbb{H}^n))} \right)\cr
&&\!\!\!+ \int_0^\infty \left( z^{-\frac{1}{2}-\frac{n}{2p}} +1 \right) e^{-z\hat\beta}dz \left(\tilde C^{\frac{1}{p}+\frac{1}{n}} \norm{f}_{\infty,L^{\frac{p}{3}}(\mathbb{H}^n)}\right)\cr
&\leq&\!\!\! \norm{u_0}_{L^{\frac{p}{2}}(\mathbb{H}^n)} + \tilde{K}\left(\alpha {k(\gamma)}   \norm{\omega}^2_{L^\infty(\r_+, L^{\frac{p}{2}}(\mathbb{H}^n))} + \norm{f}_{\infty,L^{\frac{p}{3}}(\mathbb{H}^n)}\right),
\end{eqnarray*} 
where $\widetilde{K}$ is given as in the proof of Assertion $(i)$. Our proof is completed.
\end{proof} 
For the sake of convenience, we denote $X=  L^{\frac{p}{3}}(\mathbb{H}^n)$ and $Y= L^{\frac{p}{2}}(\mathbb{H}^n)$. By using Assertion $(ii)$ of Lemma \ref{Thm:linear}, we can define the solution operator $S: C_b(\r_+, X\times Y)\to \mathcal{X}=C_b(\r_+,Y)$ of system \eqref{LKS} as follows
\begin{align*}
S: C_b(\r_+,X\times Y) &\rightarrow \mathcal{X}=C_b(\r_+,Y)\cr
(f,\omega)&\mapsto S(f,\omega)
\end{align*}
where $X\times Y$ is the Cartesian product space equipped with the norm $\norm{\cdot}_{X\times Y} = \norm{\cdot}_X + \norm{\cdot}_Y$ and
\begin{equation}\label{SolOpe}
S(f,\omega)(t) = e^{t {\Delta_{\mathbb{H}^n}}}u_0 + \int_0^t \nabla_x \cdot e^{(t-s) {\Delta_{\mathbb{H}^n}}}\left[ -\alpha \omega\nabla_x(- {\Delta_{\mathbb{H}^n}} + \gamma I)^{-1}\omega+ f \right](s)ds.
\end{equation}
The norm of $AAP(\r_+,X\times Y)$ is inherited from the above norm.

Next, we state and prove the main theorem of this section.
\begin{theorem}\label{pest}
Let $n\geq 2$ and ${\max \left\lbrace 3,n\right\rbrace } <p<2n$. Suppose that functions $f$ and $\omega$ are given such that the function $t \mapsto (f(t),\omega(t))$ belongs  to $AAP(\mathbb{R}_+,X\times Y)$. Then, the Cauchy problem \eqref{LKS} has one and only one mild solution $\hat{u} \in AAP(\mathbb{R}_+,Y)$ satisfying 
\begin{equation}\label{boundedness2}
\norm{u(t)}_{L^{\frac{p}{2}}(\mathbb{H}^n)} \leq  \norm{u_0}_{L^{\frac{p}{2}}(\mathbb{H}^n)}  +  \tilde{K}\left( \alpha {k(\gamma)}\norm{\omega}^2_{\infty,L^{\frac{p}{2}}(\mathbb{H}^n)} + \norm{f}_{\infty,L^{\frac{p}{3}}(\mathbb{H}^n)} \right),
\end{equation}
where $\tilde{K}$ is given in Lemma \ref{Thm:linear}.
\end{theorem}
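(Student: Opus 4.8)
The plan is to realize $\hat u$ as the unique fixed point of an affine decomposition that respects the splitting $AAP(\r_+, X\times Y) = AP(\r, X\times Y)\oplus C_0(\r_+, X\times Y)$. Write $(f,\omega) = (f_1,\omega_1) + (f_2,\omega_2)$ with $(f_1,\omega_1)\in AP(\r, X\times Y)$ and $(f_2,\omega_2)\in C_0(\r_+, X\times Y)$. The nonlinear-looking term $\alpha\,\omega\nabla_x(-\Delta_{\mathbb H^n}+\gamma I)^{-1}\omega$ is bilinear in $\omega$, so after expanding it decomposes into an almost periodic piece built from $\omega_1$ and a piece that tends to zero (the cross terms $\omega_1\otimes\omega_2$ and $\omega_2\otimes\omega_2$, each of which lies in $C_0$ because one factor does and the bilinear form is bounded $Y\times Y\to X$ by Lemma \ref{invertEs}). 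So it suffices to treat separately the almost periodic forcing on the whole line and the $C_0$ correction.

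\textbf{Almost periodic part on the line.} For the purely almost periodic data $(f_1,\omega_1)\in AP(\r, X\times Y)$ I would consider the whole-line mild formula \eqref{mild:linear}, which by Assertion $(i)$ of Lemma \ref{Thm:linear} defines a bounded function $h\in C_b(\r, Y)$. The key point is that $h$ is in fact almost periodic: using the translation characterization of $AP$, for $\epsilon>0$ pick a common $\epsilon$-almost period $T$ of $(f_1,\omega_1)$; then $h(\cdot+T)-h(\cdot)$ is given by the same convolution integral \eqref{mild:linear} applied to the translated data minus the original, and the estimates from the proof of Lemma \ref{Thm:linear}(i) — the integrable kernels $\bigl(z^{-n/p}+1\bigr)e^{-z\beta}$ and $\bigl(z^{-1/2-n/(2p)}+1\bigr)e^{-z\hat\beta}$ together with the bilinear bound $\|\omega_1(s+T)\nabla_x(\cdots)\omega_1(s+T) - \omega_1(s)\nabla_x(\cdots)\omega_1(s)\|_X \lesssim \|\omega_1\|_{\infty,Y}\,\|\omega_1(\cdot+T)-\omega_1(\cdot)\|_{\infty,Y}$ — give $\sup_t\|h(t+T)-h(t)\|_Y \le C(1+\|\omega_1\|_{\infty,Y})\epsilon$. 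Hence $h\in AP(\r,Y)$.

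\textbf{The $C_0$ correction and assembling the solution.} With $h$ in hand, define $\varphi$ on $\r_+$ by $\varphi(t) = e^{t\Delta_{\mathbb H^n}}(u_0 - h(0)) + \int_0^t \nabla_x\cdot e^{(t-s)\Delta_{\mathbb H^n}}\Psi(s)\,ds$, where $\Psi$ collects the $f_2$ term together with all the bilinear cross terms involving $\omega_2$; this $\Psi\in C_0(\r_+,X)$. One checks $\varphi\in C_0(\r_+,Y)$: the semigroup term decays by the dispersive estimate \eqref{dispersive} (the factor $e^{-t\gamma_{p/2,p/2}}$, with $h_n(t)$ bounded for $t\ge1$), and for the Duhamel integral one splits $\int_0^{t/2}+\int_{t/2}^t$; on $[t/2,t]$ one uses $\|\Psi(s)\|_X\to0$, on $[0,t/2]$ one uses the exponential factor $e^{-(t-s)\hat\beta}$ against the bounded tail — the standard argument that convolution of an integrable kernel with a $C_0$ function is $C_0$. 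Then $\hat u := h|_{\r_+} + \varphi$ solves the integral equation \eqref{mild:linear*} with data $u_0$ (since $\hat u(0) = h(0) + (u_0-h(0)) = u_0$), lies in $AAP(\r_+,Y)$, and the bound \eqref{boundedness2} is exactly \eqref{boundedness1} from Lemma \ref{Thm:linear}(ii). Uniqueness within $C_b(\r_+,Y)$ — a fortiori within $AAP$ — follows from the uniqueness clause in Lemma \ref{Thm:linear}(ii).

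\textbf{Main obstacle.} The routine parts are the two $C_0$-stability arguments and the bilinear decomposition; the genuine content is verifying that the whole-line convolution $h$ is almost periodic. The delicate step there is justifying that an $\epsilon$-almost period of $\omega_1$ in $Y$ produces, through the \emph{quadratic} term, an $\epsilon$-almost period of $h$ in $Y$ — this needs the bilinear estimate of Lemma \ref{invertEs} applied to the difference $\omega_1(s+T)\otimes\omega_1(s+T)-\omega_1(s)\otimes\omega_1(s)$ rewritten as $\omega_1(s+T)\otimes[\omega_1(s+T)-\omega_1(s)] + [\omega_1(s+T)-\omega_1(s)]\otimes\omega_1(s)$, so that each term carries one factor of the small quantity $\|\omega_1(\cdot+T)-\omega_1(\cdot)\|_{\infty,Y}$ — together with the dominated-convergence/uniform-integrability control on the kernels that was already established in the proof of Lemma \ref{Thm:linear}. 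Once that is set up, combining it with the $C_0$ pieces and invoking uniqueness of the $AAP$ decomposition finishes the proof.
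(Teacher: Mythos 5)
Your proposal is correct and follows essentially the same route as the paper's proof: decompose the data into an almost periodic part plus a $C_0$ part, expand the bilinear term accordingly, realize the AP contribution as the whole-line integral of Lemma \ref{Thm:linear}(i) and prove its almost periodicity via the translated-data estimate with the splitting $\omega_1(\cdot+T)\otimes[\omega_1(\cdot+T)-\omega_1]+[\omega_1(\cdot+T)-\omega_1]\otimes\omega_1$, handle the $C_0$ remainder by the $\int_0^{t/2}+\int_{t/2}^{t}$ splitting together with the dispersive decay of $e^{t\Delta_{\mathbb{H}^n}}\left(u_0-h(0)\right)$, and conclude with the boundedness/uniqueness of Lemma \ref{Thm:linear}(ii) — exactly the paper's Steps 1–3. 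The only small slip is cosmetic: the bilinear term $\omega\nabla_x(-\Delta_{\mathbb{H}^n}+\gamma I)^{-1}\omega$ is bounded from $Y\times Y$ into $L^{\frac{pn}{4n-p}}(\mathbb{H}^n)$ (not into $X=L^{\frac{p}{3}}(\mathbb{H}^n)$), which is the exponent actually used with the kernel $[h_n(t-s)]^{2/p}$ in Lemma \ref{Thm:linear}, so the argument is unaffected.
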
 
\def\xcal{\mathcal X}
\begin{proof}

Thanks to Lemma \ref{Thm:linear}, it is {sufficient} to show that the solution operator $S$ maps $AAP(\mathbb{R}_+,X\times Y)$ into $AAP(\mathbb{R}_+,Y)$. 

Indeed, for each $(f,\omega)\in AAP( \mathbb{R}_+, X\times Y) $, there exist  $(H,\eta) \in AP(\mathbb{R},X\times Y)$ and $ (\Phi,\theta) \in C_0(\r_+,X\times Y)$ such that  $f(t) =  H(t)+ \Phi(t)$ and $\omega(t)=\eta(t)+\theta(t)$ for all $t \in \r_+$. Utilization of \eqref{SolOpe}, we have
\begin{eqnarray*}\label{seperate}
&S&\!\!\!\!\!\!(f,\omega)(t) = e^{t {\Delta_{\mathbb{H}^n}} }u_0 + \int_0^t \nabla_x \cdot e^{(t-s) {\Delta_{\mathbb{H}^n}}}\left[ -\alpha \omega\nabla_x(- {\Delta_{\mathbb{H}^n}} + \gamma I)^{-1}\omega+ f \right](s)ds\cr
&=& e^{t {\Delta_{\mathbb{H}^n}}}u_0 + \int_0^t \nabla_x \cdot e^{(t-s) {\Delta_{\mathbb{H}^n}}}H(s)ds +  \int_0^t \nabla_x \cdot e^{(t-s) {\Delta_{\mathbb{H}^n}}}\Phi(s)ds \cr
&&-  \int_0^t \nabla_x \cdot e^{(t-s) {\Delta_{\mathbb{H}^n}}}\left[ \alpha \eta\nabla_x(- {\Delta_{\mathbb{H}^n}}+ \gamma I)^{-1}\eta \right](s)ds - \int_0^t \nabla_x \cdot e^{(t-s) {\Delta_{\mathbb{H}^n}}}\left[ \alpha \theta\nabla_x( -{\Delta_{\mathbb{H}^n}}+ \gamma I)^{-1}\omega \right](s)ds \cr
&&-  \int_0^t \nabla_x \cdot e^{(t-s) {\Delta_{\mathbb{H}^n}}}\left[ \alpha \eta\nabla_x(- {\Delta_{\mathbb{H}^n}}+ \gamma I)^{-1}\theta \right](s)ds  \cr
&=& e^{t {\Delta_{\mathbb{H}^n}} }u_0 + \int_{-\infty}^t \nabla_x \cdot e^{(t-s) {\Delta_{\mathbb{H}^n}}}H(s)ds +  \int_0^t \nabla_x \cdot e^{(t-s) {\Delta_{\mathbb{H}^n}}}\Phi(s)ds - \int_{-\infty}^0 \nabla_x \cdot e^{(t-s) {\Delta_{\mathbb{H}^n}}}H(s)ds\cr
&&-  \int_{\infty}^t \nabla_x \cdot e^{(t-s) {\Delta_{\mathbb{H}^n}}}\left[ \alpha \eta\nabla_x(- {\Delta_{\mathbb{H}^n}}+ \gamma I)^{-1}\eta \right](s)ds - \int_0^t \nabla_x \cdot e^{(t-s) {\Delta_{\mathbb{H}^n}}}\left[ \alpha \theta\nabla_x(-{\Delta_{\mathbb{H}^n}}+ \gamma I)^{-1}\omega \right](s)ds  \cr
&& - \int_0^t \nabla_x \cdot e^{(t-s) {\Delta_{\mathbb{H}^n}}}\left[\alpha \eta\nabla_x(-{\Delta_{\mathbb{H}^n}}+ \gamma I)^{-1}\theta \right](s)ds +\int_{\infty}^0 \nabla_x \cdot e^{(t-s) {\Delta_{\mathbb{H}^n}}}\left[ \alpha \eta\nabla_x(- {\Delta_{\mathbb{H}^n}}+ \gamma I)^{-1}\eta \right](s)ds \cr
\end{eqnarray*}
for all $t\in \mathbb{R}_+$.
Setting
\begin{eqnarray*}
&&\hat{S}(H)(t)=\int_{-\infty}^t \nabla_x \cdot e^{(t-s){\Delta_{\mathbb{H}^n}}}H(s)ds ,\cr
&& \hat{S}(\eta, \eta)(t)=\int_{-\infty}^t \nabla_x \cdot e^{(t-s) {\Delta_{\mathbb{H}^n}}}\left[ \alpha \eta\nabla_x(- {\Delta_{\mathbb{H}^n}}+ \gamma I)^{-1}\eta \right](s)ds,
\end{eqnarray*}
and
\begin{eqnarray*}
&&\tilde{S}(\Phi)(t) = \int_{0}^t  \nabla_x \cdot e^{(t-s) {\Delta_{\mathbb{H}^n}}}\Phi(s)ds ,\cr
&&\tilde{S}(\theta, \omega)(t) = \int_{0}^t  \nabla_x \cdot e^{(t-s) {\Delta_{\mathbb{H}^n}}}\left[ \alpha \theta\nabla_x(-{\Delta_{\mathbb{H}^n}}+ \gamma I)^{-1}\omega \right](s)ds,\cr
&&\tilde{S}(\eta,\theta)(t) = \int_{0}^t  \nabla_x \cdot e^{(t-s) {\Delta_{\mathbb{H}^n}}}\left[ \alpha \eta\nabla_x(-{\Delta_{\mathbb{H}^n}}+ \gamma I)^{-1}\theta \right](s)ds.
\end{eqnarray*}
Following Lemma \ref{Thm:linear}, we have that: the functions $\hat{S}(H)$ and $\hat{S}(\eta, \eta)$ are bounded in $C_b(\mathbb{R},Y)$; the functions  $\tilde{S}(\Phi),\,\tilde{S}(\theta, \theta)$, $\tilde{S}(\eta, \theta)$, $\tilde{S}(\theta, \eta)$ and {$\tilde{S}(\theta,\omega )$} are bounded in $C_b(\mathbb{R}_+,Y)$. {From} \eqref{seperate}, we have
\begin{eqnarray}
S(f,\omega)(t) &=& e^{t\Delta_{\mathbb{H}^n} }u_0 + \left( \hat{S}(H)(t) - \hat{S}(\eta, \eta)(t) \right)\cr
&&+ \left( \tilde{S}(\Phi)(t)  -\tilde{S}( \theta, \omega)(t) - \tilde{S}( \eta, \theta ) \right) \cr
&&- \left( e^{-t\Delta_{\mathbb{H}^n}}\hat{S}(H)(0) - e^{-t\Delta_{\mathbb{H}^n}}\hat{S}(\eta, \eta)(0) \right) \hbox{   for all  } t\in\mathbb{R_+}.
\end{eqnarray}

Now, we prove $S(f,\omega)\in AAP(\r_+,Y)$ by  three below steps:\\

\underline{\bf Step 1:} We first show that the function $ \hat{S}(H)-\hat{S}(\eta, \eta)$ belongs to $AP(\mathbb{R}, Y)$. 
Since $ (H,\eta)\in AP(\mathbb{R}, X\times Y)$, for each $ \epsilon  > 0$, there exists $l_{\epsilon}>0 $ such that every interval of length $l_{\epsilon}$ contains at least a number $T $ with the following property
$$\sup_{t \in \r } \left( \| H(t+T)  - H(t) \|_{C_b(\r,X)} + \| \eta(t+T) - \eta(t) \|_{C_b(\r,Y)} \right) < \epsilon,$$
where
$\norm{H}_{C_b(\r,X)}= \sup_{t\in \mathbb{R}} \norm{H(t)}_{X} $
and $\norm{\eta}_{C_b(\r,Y)}= \sup_{t\in \mathbb{R}}\norm{\eta(t)}_{Y}$.\\
By the same way as the proof of Lemma \ref{Thm:linear}, we can estimate
\begin{eqnarray}\label{AP1}
&&\left\|\hat{S}(H)(t+T) - \hat{S}(H)(t)\right\|_Y \cr
&&= \left\|\int_{-\infty}^{t+T} \nabla_x \cdot e^{(t+T-s) {\Delta_{\mathbb{H}^n}}}H(s)
ds - \int_{-\infty}^{t} \nabla_x \cdot e^{(t-s) {\Delta_{\mathbb{H}^n}}}H(s) ds \right\|_Y \cr
&&= \left\|\int_{0}^\infty  \nabla_x \cdot e^{s {\Delta_{\mathbb{H}^n}}}[ H(t+T-\tau)- H(t-\tau)] ds \right\|_Y \cr
&&\leq \tilde{K}\left\| H(\cdot+T) - H(\cdot)\right\|_{C_b(\r,X)} < \tilde{K}\epsilon,
\end{eqnarray}
for all $t \in \r$, where $\tilde{K}$ is determined in Lemma \ref{Thm:linear}. 
Moreover, we also have
\begin{eqnarray*}
&\hat{S}&\!\!\!\!\!(\eta,\eta)(t+T) - \hat{S}(\eta, \eta)(t)\cr
&=& \int_{-\infty}^{t+T} \nabla_x \cdot e^{(t+T-s) {\Delta_{\mathbb{H}^n}}}\left[ \alpha \eta\nabla_x(- {\Delta_{\mathbb{H}^n}}+ \gamma I)^{-1}\eta \right](s)ds\cr
&& - \int_{-\infty}^{t} \nabla_x \cdot e^{(t-s) {\Delta_{\mathbb{H}^n}}}\left[ \alpha \eta\nabla_x(- {\Delta_{\mathbb{H}^n}}+ \gamma I)^{-1}\eta \right](s)ds \cr
&=& \int_{0}^\infty \nabla_x \cdot e^{s {\Delta_{\mathbb{H}^n}}} \{\left[ \alpha \eta\nabla_x(- {\Delta_{\mathbb{H}^n}}+ \gamma I)^{-1}\eta \right](t+T-s) -\left[ \alpha \eta\nabla_x(- {\Delta_{\mathbb{H}^n}}+ \gamma I)^{-1}\eta \right](t-s)  \} ds.\cr
\end{eqnarray*}
Thus,
\begin{eqnarray}\label{AP2}
&&\left\|\hat{S}(\eta,\eta)(t+T) - \hat{S}(\eta,\eta)(t)\right\|_Y \cr
&&\leq \left\| \int_{0}^\infty  \nabla_x \cdot e^{s {\Delta_{\mathbb{H}^n}}}  \alpha \left[ \eta (t+T-s)- \eta (t-s)) \right] \nabla_x(-{\Delta_{\mathbb{H}^n}}+ \gamma I)^{-1}\eta (t+T-s)  ds \right\|_Y\cr
&&+ \left\|\int_{0}^\infty  \nabla_x \cdot e^{s {\Delta_{\mathbb{H}^n}}}  \alpha  \eta (t-s))  \nabla_x(- {\Delta_{\mathbb{H}^n}}+ \gamma I)^{-1}\left[ \eta (t+T-s)- \eta (t-s)) \right]  ds \right\|_{Y} \cr
&&\leq 2\tilde{K}\alpha {k(\gamma)} \norm{\eta(\cdot+T)- \eta(\cdot)}_{C_b(\r,Y)}\norm{\eta}_{C_b(\r,Y)} < 2\tilde{K}\alpha {k(\gamma)}\norm{\eta}_{C_b(\r,Y)}\epsilon,
\end{eqnarray}
for all $t\in \mathbb{R}$, where $\tilde{K}$ is given as in Lemma \ref{Thm:linear}.

Combining inequalities \eqref{AP1} and \eqref{AP2}, we obtain that 
$$ \norm{(\hat{S}(H)-\hat{S}(\eta, \eta))(t+T) - (\hat{S}(H)-\hat{S}(\eta, \eta))(t)}_{Y} < \tilde{K}(1+  2\alpha {k(\gamma)}\norm{\eta}_{C_b(\r,Y)})\epsilon.$$
Consequently, $\hat{S}(H)-\hat{S}(\eta, \eta)$ belongs to $AP(\mathbb{R}, Y)$.\\

\underline{\bf Step 2:} Secondly, we show that $\tilde{S}(\Phi) - \tilde{S}(\theta, \omega) - \tilde{S}(\eta, \theta)$ belongs to $C_0(\r_+,Y)$. 
Indeed, the first term $\tilde{S}(\Phi)(t)$ can be rewritten as follows
\begin{eqnarray*}
\tilde{S}(\Phi)(t) &=& \int_{0}^t  \nabla_x \cdot e^{(t-s) {\Delta_{\mathbb{H}^n}}}\Phi(s)ds \hbox{   } (t\in\mathbb{R_+}) \cr
&=&\int_{0}^{t/2}  \nabla_x \cdot e^{(t-s) {\Delta_{\mathbb{H}^n}}}\Phi(s)ds  +\int_{t/2}^t  \nabla_x \cdot e^{(t-s) {\Delta_{\mathbb{H}^n}}}\Phi(s)ds \cr
&=& S_1(\Phi)(t) +S_2(\Phi)(t).
\end{eqnarray*}
By the similar proof of Lemma \ref{Thm:linear}, for $t>2$, one implies
\begin{eqnarray*}
\norm{S_1(\Phi)(t)}_Y &\leqslant& \int_0^{t/2} \left( (t-s)^{-\frac{1}{2}-\frac{n}{2p}} +1 \right) e^{-(t-s)\hat\beta}ds \left(\tilde C^{\frac{1}{p}+\frac{1}{n}} \norm{\Phi}_{\infty,L^{\frac{p}{3}}(\mathbb{H}^n)}\right)\cr
&\leq&  \tilde C^{\frac{1}{p}+\frac{1}{n}} \left( \left( \frac{t}{2}\right)^{-\frac{1}{2}-\frac{n}{2p}}+1\right)\frac{1}{\hat\beta}(e^{-\frac{\hat\beta t}{2}} - e^{-\hat\beta t}) \norm{\Phi}_{C_b(\r_+,X)}\cr
& \to & 0, \hbox{   when  } t \to +\infty.
\end{eqnarray*}
And hence,
\begin{equation}\label{lim11}
\lim_{t\to +\infty}\norm{S_1(\Phi)(t)}_{Y}=0.
\end{equation}

Further note that $\lim\limits_{t\to +\infty}\norm{\Phi(t)}_X = 0$, then for all $\epsilon>0$, there exists $t_0>0$ large enough such that for all $t>t_0$, we have $\norm{\Phi(t)}_X<\epsilon$.
Using this and again by the same way to prove Lemma \ref{Thm:linear}, it therefore follows that
\begin{eqnarray*}
\norm{S_2(\Phi)(t)}_{Y} \leqslant \tilde{K}  \epsilon \hbox{   for  } t>t_0,
\end{eqnarray*}
where the constant $\tilde{K}$ is given in Lemma \ref{Thm:linear}. This yields 
\begin{equation}\label{lim12}
\lim_{t\to+\infty}\norm{S_2(\Phi)(t)}_{Y}=0.
\end{equation}
Combining \eqref{lim11} with \eqref{lim12}, we obtain
\begin{equation}\label{lim1}
\lim_{t\to+\infty}\norm{\tilde{S}(\Phi)(t)}_{Y}=0.
\end{equation}
Consequently, $\tilde{S}(\Phi)\in C_0(\r_+,Y)$.

 Only the case $\tilde{S}(\theta,\omega)\in C_0(\r_+,Y)$ is considered. The case $\tilde{S}(\eta, \theta)\in C_0(\r_+,Y)$ can be treated by a similar approach. Indeed, we have 
\begin{eqnarray*}
&\tilde{S}&\!\!\!\!\!(\theta,\omega)(t)\cr&=& \int_{0}^t \nabla_x \cdot e^{(t-s) {\Delta_{\mathbb{H}^n}}}\left[ \alpha \theta\nabla_x({-\Delta_{\mathbb{H}^n}}+ \gamma I)^{-1}\omega \right](s)ds \hbox{   } (t\in\mathbb{R_+}) \cr
&=&\int_{0}^{t/2}  \nabla_x \cdot e^{(t-s) {\Delta_{\mathbb{H}^n}}}\left[ \alpha \theta\nabla_x( -{\Delta_{\mathbb{H}^n}}+ \gamma I)^{-1}\omega \right](s)ds +\int_{t/2}^t \nabla_x \cdot e^{(t-s) {\Delta_{\mathbb{H}^n}}}\left[ \alpha \theta\nabla_x(-{\Delta_{\mathbb{H}^n}}+ \gamma I)^{-1}\omega \right](s)ds\cr
&=& S_3(\theta,\omega)(t) +S_4(\theta,\omega)(t).
\end{eqnarray*}
Furthermore, the argument from the proof of Lemma \ref{Thm:linear}, for $t>2$, can be reused to deduce 
\begin{eqnarray*}
 \|S_3(\theta,\omega)(t)\|_{Y} &\leq& \int_0^{t/2} \|\nabla_x \cdot e^{(t-s) {\Delta_{\mathbb{H}^n}}}\left[ \alpha \theta\nabla_x(-{\Delta_{\mathbb{H}^n}}+ \gamma I)^{-1}\omega \right](s)\|_{Y}ds\cr 
&\le& \int_0^{t/2} [h_n(t-s)]^{\frac{2}{p}}e^{-\beta(t-s)}\alpha   C {k(\gamma)}ds \norm{\theta}_{\infty, L^{\frac{p}{2}}(\mathbb{H}^n)} \norm{\omega}_{\infty, L^{\frac{p}{2}}(\mathbb{H}^n)} \cr
&\le& \int_0^{t/2} e^{-\beta(t-s)}ds  \alpha \tilde C^{\frac{2}{p}} C{k(\gamma)}\norm{\theta}_{\infty, L^{\frac{p}{2}}(\mathbb{H}^n)} \norm{\omega}_{\infty, L^{\frac{p}{2}}(\mathbb{H}^n)} \cr
&\le& \frac{1}{\beta}\left(  e^{-\frac{\beta t}{2}}-e^{-\beta t} \right)   \alpha \tilde C^{\frac{2}{p}} C{k(\gamma)}\norm{\theta}_{\infty, L^{\frac{p}{2}}(\mathbb{H}^n)} \norm{\omega}_{\infty, L^{\frac{p}{2}}(\mathbb{H}^n)} \cr
&\longrightarrow& 0, \hbox{   when   } t \to +\infty.
\end{eqnarray*}
This limit leads to
\begin{equation}\label{lim21}
\lim_{t\to +\infty}\norm{S_3(\theta,\omega)(t)}_{Y}=0.
\end{equation}
On the other hand, we have $\lim\limits_{t\to +\infty}\norm{\theta(t)}_Y = 0$, then for all $\epsilon>0$, there exists $t_0>0$ large enough such that for all $t>t_0$, we have $\norm{\theta(t)}_Y<\epsilon$. {Combining} with the argument from the proof of Lemma \ref{Thm:linear}, this implies
\begin{eqnarray*}
\norm{S_4(\theta,\omega)(t)}_{Y} \leqslant \tilde{K}\alpha {k(\gamma)\norm{\omega}_{\infty, L^{\frac{p}{2}}(\mathbb{H}^n)}}\epsilon\;\; \hbox{   for  } t>t_0,
\end{eqnarray*}
where the constant $\tilde{K}$ is given in Lemma \ref{Thm:linear}. This yields that
\begin{equation}\label{lim22}
\lim_{t\to+\infty}\norm{S_4(\theta,\omega)(t)}_{Y}=0.
\end{equation}
From these limits \eqref{lim21} and \eqref{lim22}, we obtain
\begin{equation}\label{lim2}
\lim_{t\to+\infty}\norm{\tilde{S}(\theta,\omega)(t)}_{Y}=0.
\end{equation}
In other words, $\tilde{S}(\theta,\omega)\in C_0(\r_+,Y)$. 

\underline{\bf Step 3:}
Finally, we prove that the function $t\mapsto e^{t {\Delta_{\mathbb{H}^n}}}\left(u(0) - \hat{S}(H)(0) + \hat{S}(\eta, \eta)(0)\right)$ belongs to $C_0(\r_+,Y)$. Due to Lemma \ref{estimates}, we have
\begin{eqnarray}
&&\left\| e^{t {\Delta_{\mathbb{H}^n}}}\left(u(0)- \hat{S}(H)(0) + \hat{S}(\eta, \eta)(0)\right) \right\|_Y\cr
&&\leq e^{- \tilde{\beta} t}\left( \left\|u(0) \right\|_Y + \norm{\hat{S}(H)(0)}_Y + \norm{\hat{S}(\eta, \eta)(0)}_Y\right)\cr
&&\leq e^{- \tilde{\beta} t}\left( \left\|u(0) \right\|_Y + \tilde{K}\norm{H}_{C_b(\r_+,X)} + \tilde{K} \alpha {k(\gamma)}\norm{\eta}^2_{C_b(\r_+,Y)}\right)  \to 0 \hbox{   when   } t\to +\infty,
\end{eqnarray}
where $ \tilde{\beta} =  \gamma_{\frac{p}{2},\frac{p}{2}}>0$.  Then
\begin{equation*}
\lim_{t\to +\infty}\norm{e^{t {\Delta_{\mathbb{H}^n}}}\left(u(0) - \hat{S}(H)(0) + \hat{S}(\eta,\eta)(0) \right)}_{Y}=0. 
\end{equation*}

Our proof is completed by combining three above steps.
\end{proof}

\section{The semi-linear equations} \label{S4} 
In this section, we consider the Keller-Segel system with initial data on the real hyperbolic space $(\mathbb{H}^n,{\mathfrak g) }$ (where $n\geqslant 2$):
\begin{equation}\label{smKS} 
\left\{
  \begin{array}{rll} 
u_t \!\! & = {\Delta_{\mathbb{H}^n}} u + \nabla_x \cdot \left[ - \alpha u\nabla_x (- {\Delta_{\mathbb{H}^n}}+ \gamma I)^{-1}u + f\right], \; & x\in  \mathbb{H}^n,\, t \in \r_+, \hfill\cr
u(0) \!\!&=\; u_0.
\end{array}\right. 
\end{equation} 
The {mild solution} of the Cauchy problem \eqref{smKS} is given as
\begin{equation}\label{mild:smlinear*}
u(t) = e^{t {\Delta_{\mathbb{H}^n}} }u_0 + \int_0^t \nabla_x \cdot e^{(t-s) {\Delta_{\mathbb{H}^n}}}\left[ -\alpha u\nabla_x(- {\Delta_{\mathbb{H}^n}}+ \gamma I)^{-1}u + f \right](s)ds.
\end{equation}
The aim of this section is to prove the existence, uniqueness of the small asymptotically almost periodic mild solutions. We can apply fixed point arguments and the results from linear equation as above. 
\begin{theorem}\label{thm2.20}
Let $n\geq 2$ and ${\max \left\lbrace 3,n\right\rbrace }<p<2n$. Suppose that a function $f$  belongs  to $AAP(\mathbb{R}_+, L^{\frac{p}{3}}(\mathbb{H}^n))$.  If the norms $\|u_0\|_{\infty,L^{\frac{p}{2}}(\mathbb{H}^n)}$ and  $\|f\|_{\infty,L^{\frac{p}{3}}(\mathbb{H}^n)}$ are sufficiently small,  the equation \eqref{smKS} has one and only one asymptotically almost periodic mild solution $\hat{u}$ on a small ball of  
$C_b(\r, L^{\frac{p}{2}}(\mathbb{H}^n))$.

Moreover, if the function $f$ satisfies that $\sup\limits_{t>0} e^{\sigma t}\norm{f(t)}_{L^{\frac{p}{3}}(\mathbb{H}^n)}<+\infty$, then the above solution $\hat{u}$ is exponential time-decay, i.e., it satisfies
\begin{equation}\label{exponentialdecay}
\norm{\hat{u}(t)}_{L^{\frac{p}{2}}(\mathbb{H}^n)} \leqslant De^{-\sigma t}
\end{equation}
for all $t>0$, where 
$\sigma= \min\left\{ \gamma_{p/2,p/2},\,\frac{\gamma_{p/2,p/2}+\gamma_{pn/(4n-p),p/2}}{2},\, \frac{\gamma_{p/2,p/2}+\gamma_{p/3,p/2}}{2}\right\}$.
\end{theorem}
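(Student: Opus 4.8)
Write $Y=L^{p/2}(\mathbb H^n)$, $X=L^{p/3}(\mathbb H^n)$, and $L=\nabla_x(-\Delta_{\mathbb H^n}+\gamma I)^{-1}$. The plan is to obtain the first assertion by a contraction argument on a small ball, built on top of the linear theory of Section~\ref{S3}, and then to read off the exponential decay from the integral equation \eqref{mild:smlinear*} by a Gronwall-type absorption. For $u\in C_b(\mathbb R_+,Y)$ define
$$\Phi(u)(t)=e^{t\Delta_{\mathbb H^n}}u_0+\int_0^t\nabla_x\cdot e^{(t-s)\Delta_{\mathbb H^n}}\bigl[-\alpha\,u\,(Lu)+f\bigr](s)\,ds,$$
so a fixed point of $\Phi$ is exactly a mild solution of \eqref{smKS}. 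From Lemma~\ref{Thm:linear}(ii), applied with $\omega=u$, one gets the self-map bound $\|\Phi(u)\|_{\infty,Y}\le\|u_0\|_Y+\tilde K(\alpha k(\gamma)\|u\|_{\infty,Y}^2+\|f\|_{\infty,X})$, so the closed ball $B_\rho=\{\,u:\|u\|_{\infty,Y}\le\rho\,\}$ is preserved once $\|u_0\|_Y+\tilde K\|f\|_{\infty,X}\le\rho/2$ and $\tilde K\alpha k(\gamma)\rho\le 1/2$. For the Lipschitz bound I would use the bilinearity identity $u_1(Lu_1)-u_2(Lu_2)=(u_1-u_2)(Lu_1)+u_2\bigl(L(u_1-u_2)\bigr)$ and re-run, line by line, the chain of inequalities in the proof of Lemma~\ref{Thm:linear} (Hölder on $Y$, the boundedness of $L$ from Lemma~\ref{invertEs}, the smoothing estimate Lemma~\ref{estimates}(ii), and the Gamma-function evaluation of the resulting time convolution) to reach $\|\Phi(u_1)-\Phi(u_2)\|_{\infty,Y}\le 2\tilde K\alpha k(\gamma)\rho\,\|u_1-u_2\|_{\infty,Y}$ on $B_\rho$. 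Choosing $\rho$ (hence the smallness thresholds for $\|u_0\|_Y$ and $\|f\|_{\infty,X}$) so small that $2\tilde K\alpha k(\gamma)\rho<1$ makes $\Phi$ a contraction on $B_\rho$, giving a unique fixed point $\hat u\in B_\rho$.

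\emph{The solution is $AAP$.} Since $AAP(\mathbb R_+,Y)$ is a closed subspace of $C_b(\mathbb R_+,Y)$, it suffices to check that $\Phi$ maps $AAP(\mathbb R_+,Y)\cap B_\rho$ into itself: then the Picard iterates $u_{m+1}=\Phi(u_m)$ started at $u_0\equiv 0$ stay $AAP$, and so does their limit $\hat u$. But if $u\in AAP(\mathbb R_+,Y)$ then $(f,u)\in AAP(\mathbb R_+,X\times Y)$ (using $f\in AAP(\mathbb R_+,X)$) and $\Phi(u)=S(f,u)$ with $S$ the solution operator \eqref{SolOpe}; Theorem~\ref{pest} is precisely the statement that $S$ sends $AAP(\mathbb R_+,X\times Y)$ into $AAP(\mathbb R_+,Y)$, which is what is needed.

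\emph{Exponential decay.} Put $M_f:=\sup_{t>0}e^{\sigma t}\|f(t)\|_X<\infty$ and $M(t):=\sup_{0\le s\le t}e^{\sigma s}\|\hat u(s)\|_Y$, which is finite since $\hat u$ is bounded. Starting from \eqref{mild:smlinear*}, multiply the pointwise norm bound by $e^{\sigma t}$ and estimate each piece with Lemma~\ref{estimates} exactly as in Lemma~\ref{Thm:linear}. The linear term gives $e^{\sigma t}\|e^{t\Delta_{\mathbb H^n}}u_0\|_Y\le e^{-(\gamma_{p/2,p/2}-\sigma)t}\|u_0\|_Y\le\|u_0\|_Y$, using $\sigma\le\gamma_{p/2,p/2}$. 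For the forcing term, insert $\|f(s)\|_X\le M_fe^{-\sigma s}$ and write $e^{\sigma t}=e^{\sigma(t-s)}e^{\sigma s}$ to arrive at $M_f\int_0^t[h_n(t-s)]^{1/p+1/n}e^{-(t-s)(\hat\beta-\sigma)}\,ds$. For the quadratic term, bound one factor $\hat u(s)$ by $\rho$ and the other by $e^{-\sigma s}\bigl(e^{\sigma s}\|\hat u(s)\|_Y\bigr)$, producing $\alpha Ck(\gamma)\rho\int_0^t[h_n(t-s)]^{2/p}e^{-(t-s)(\beta-\sigma)}e^{\sigma s}\|\hat u(s)\|_Y\,ds$. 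Taking $\sup$ over $t$ and bounding $e^{\sigma s}\|\hat u(s)\|_Y\le M(t)$ inside the last integral leads to an inequality of the shape $M(t)\le A+\Lambda\rho\,M(t)$, with $A$ depending on $\|u_0\|_Y$ and $M_f$ and $\Lambda$ on the convolution kernels $\int_0^\infty[h_n(z)]^{2/p}e^{-z(\beta-\sigma)}\,dz$ and $\int_0^\infty[h_n(z)]^{1/p+1/n}e^{-z(\hat\beta-\sigma)}\,dz$. Shrinking $\rho$ once more so that $\Lambda\rho<1$, we absorb $\Lambda\rho\,M(t)$ on the left and conclude $M(t)\le A/(1-\Lambda\rho)=:D$ uniformly in $t$, i.e. $\|\hat u(t)\|_Y\le De^{-\sigma t}$, which is \eqref{exponentialdecay}.

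\emph{Main obstacle.} The algebra above is a routine repetition of the proof of Lemma~\ref{Thm:linear}; the delicate point is the decay step, namely the convergence (and, after fixing $\rho$, the smallness) of those two time-convolution integrals at the exact rate $\sigma$. Integrability near $z=0$ is precisely the hypothesis $p>n$, since it makes $\tfrac{n}{2p}<\tfrac12$; integrability for large $z$ requires the exponential weights $\beta-\sigma$ and $\hat\beta-\sigma$ to be nonnegative, and together with $\sigma\le\gamma_{p/2,p/2}$ for the linear term this explains why $\sigma$ must be taken as the minimum of the three rates $\gamma_{p/2,p/2}$, $\tfrac{\gamma_{p/2,p/2}+\gamma_{pn/(4n-p),p/2}}{2}$ and $\tfrac{\gamma_{p/2,p/2}+\gamma_{p/3,p/2}}{2}$ (if one of these weights degenerates to $0$, one picks up a harmless polynomial-in-$t$ factor, absorbed by an infinitesimal reduction of the rate). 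The remaining care is purely bookkeeping: a single $\rho$ must satisfy both the contraction condition and the absorption condition $\Lambda\rho<1$, which is what ultimately fixes how small $\|u_0\|_Y$ and $\|f\|_{\infty,X}$ have to be.
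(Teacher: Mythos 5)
Your proposal is correct and follows essentially the same route as the paper: a fixed-point argument for $\Phi(\omega)(t)=e^{t\Delta_{\mathbb{H}^n}}u_0+\int_0^t\nabla_x\cdot e^{(t-s)\Delta_{\mathbb{H}^n}}\left[-\alpha\,\omega\nabla_x(-\Delta_{\mathbb{H}^n}+\gamma I)^{-1}\omega+f\right](s)\,ds$ on a small ball, with the self-map bound coming from Lemma \ref{Thm:linear}(ii), the Lipschitz bound from the bilinear splitting, and Theorem \ref{pest} supplying the asymptotic almost periodicity; the paper contracts directly on the ball $\B_\rho^{AAP}$, while you contract in $C_b(\r_+,L^{p/2})$ and recover $AAP$-ness of the limit through Picard iterates and closedness of $AAP(\r_+,Y)$, which is equivalent bookkeeping. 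The one substantive deviation is the decay step: the paper sets $y(t)=e^{\sigma t}\norm{\hat u(t)}_{L^{p/2}}$ and closes the estimate with Gronwall's inequality, getting a bound of the form $y(t)\le A\,e^{\alpha\tilde C^{2/p}Ck(\gamma)\rho\tilde D}$ with no further condition on $\rho$, whereas your absorption inequality $M(t)\le A+\Lambda\rho\,M(t)$ requires the extra smallness $\Lambda\rho<1$; since the theorem already permits taking the data (hence $\rho$) small, this costs nothing, and your version sidesteps the slightly delicate use of Gronwall with a singular convolution kernel. Both arguments share the same caveat, which you flag explicitly: the constants $\hat D$, $\tilde D$ (your $\Lambda$) are finite only when $\sigma$ is strictly below $\beta$ and $\hat\beta$, so the stated rate is attained exactly only in that nondegenerate case.
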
 
\begin{proof}
Let 
\begin{eqnarray}\label{bro}
\B_\rho^{AAP}=\left\{\omega\in AAP(\r_+, L^{\frac{p}{2}}(\mathbb{H}^n)):  \norm{\omega}_{{\infty,L^{\frac{p}{2}}(\mathbb{H}^n)}} \le \rho \right\}
\end{eqnarray}
be the ball centered at zero and radius $\rho$. 
For each $\omega\in \B_\rho^{AAP}$, we consider the linear equation 
\begin{equation}\label{ns1}
u(t) = e^{t {\Delta_{\mathbb{H}^n}} }u_0 + \int_0^t \nabla_x \cdot e^{(t-s) {\Delta_{\mathbb{H}^n}}}\left[ -\alpha \omega\nabla_x(- {\Delta_{\mathbb{H}^n}}+ \gamma I)^{-1}\omega + f \right](s)ds.
\end{equation} 
By Theorem \ref{pest}, Equation \eqref{ns1} has a unique asymptotically almost periodic mild solution $u$ such that
\begin{eqnarray}\label{CoreEstimate}
\norm{u(t)}_{L^{\frac{p}{2}}(\mathbb{H}^n)}& \leq&  \norm{u_0}_{L^{\frac{p}{2}}(\mathbb{H}^n)}  +  \tilde{K}\left( \alpha {k(\gamma)}\norm{\omega}^2_{\infty,L^{\frac{p}{2}}(\mathbb{H}^n)} + \norm{f}_{\infty,L^{\frac{p}{3}}(\mathbb{H}^n)} \right)\cr
& \leq&  \norm{u_0}_{L^{\frac{p}{2}}(\mathbb{H}^n)}  +  \tilde{K}\left( \alpha {k(\gamma)}\rho^2 + \norm{f}_{\infty,L^{\frac{p}{3}}(\mathbb{H}^n)} \right)\cr
&\leq& {\rho}
\end{eqnarray}
if $\norm{u_0}_{L^{\frac{p}{2}}(\mathbb{H}^n)},\, \rho$ and $\norm{f}_{\infty,L^{\frac{p}{3}}(\mathbb{H}^n)}$ are small enough. Therefore, we can define a map as follows
\begin{equation}\label{defphi}
\begin{split}
\Phi:  C_b(\r_+, L^{\frac{p}{2}}(\mathbb{H}^n)) &\to C_b(\r_+, L^{\frac{p}{2}}(\mathbb{H}^n))\\ 
\omega &\mapsto \Phi(\omega)=u
\end{split}
\end{equation}
if $\norm{u_0}_{L^{\frac{p}{2}(\mathbb{H}^n)}}$, $\rho$ and $\norm{f}_{\infty,L^{\frac{p}{3}}(\mathbb{H}^n)}$ are small enough satisfying Estimate \eqref{CoreEstimate}. Hence, the map  $\Phi$ acts from  $\B_\rho^{AAP}$ into itself. It turns out that 
\begin{equation}\label{defphi1}
\Phi(\omega)(t) = e^{t {\Delta_{\mathbb{H}^n}} }u_0 + \int_0^t \nabla_x \cdot e^{(t-s) {\Delta_{\mathbb{H}^n}}}\left[ -\alpha \omega\nabla_x(- {\Delta_{\mathbb{H}^n}}+ \gamma I)^{-1}\omega + f \right](s)ds.
\end{equation}
Moreover,
for $\omega_1, \omega_2\in \B_\rho^{AAP}$, the function $u:=\Phi(\omega_1)-\Phi(\omega_2)$ becomes a unique asymptotically almost periodic mild solution to the equation 
\begin{eqnarray*}
\partial_tu  - \Delta_{\mathbb{H}^n} u   &=& -\alpha \omega_1\nabla_x(- {\Delta_{\mathbb{H}^n}}+ \gamma I)^{-1}\omega_1 +\alpha \omega_2\nabla_x(- {\Delta_{\mathbb{H}^n}}+ \gamma I)^{-1}\omega_2 \cr
&=&{-\alpha \omega_1\nabla_x(- {\Delta_{\mathbb{H}^n}}+ \gamma I)^{-1}(\omega_1-\omega_2) +\alpha (\omega_2-\omega_1)\nabla_x(- {\Delta_{\mathbb{H}^n}}+ \gamma I)^{-1}\omega_2.}
 \end{eqnarray*}
Thus, by \eqref{defphi1} and the same way to establish inequality \eqref{CoreEstimate}, we can estimate
{\begin{eqnarray}\label{Core}
\norm{\Phi(\omega_1)-\Phi(\omega_2)}_{\mathcal{X}} &\leqslant 2\alpha {k(\gamma)} \rho \norm{\omega_1-\omega_2}_{\mathcal{X}}.
\end{eqnarray}  }
Therefore,  $\Phi$ invokes a contradiction if $\rho$ is sufficiently small. This can be shown directly for a unique fixed point $\hat{u}$ of $\Phi$. Due to the definition of $\Phi$, this function $\hat{u}$ is an  asymptotically almost periodic mild solution to Keller-Segel system \eqref{smKS}. The uniqueness of $\hat{u}$ in the small ball $\B_\rho^{AAP}$ is clearly by using \eqref{Core}.

Now we prove the exponential decay of asymptotically almost periodic solution $\hat{u}$. For a given mild solution $\hat{u}$ in the small ball $\cal B_\rho^{AAP}$ of $C_b(\r_+,L^{\frac{p}{2}}(\mathbb{H}^n))$, i.e., $\norm{\hat{u}}_{C_b(\r_+,L^{\frac{p}{2}}(\mathbb{H}^n))}<\rho$, we have
\begin{eqnarray}\label{Gron}
\norm{\hat{u}(t)}_{L^{\frac{p}{2}}(\mathbb{H}^n)} &\leqslant& \norm{e^{t {\Delta_{\mathbb{H}^n}}}u_0}_{L^{\frac{p}{2}}(\mathbb{H}^n)} + \alpha \int_0^t \norm{\nabla_x \cdot e^{(t-s) {\Delta_{\mathbb{H}^n}}}\left[ \hat{u}\nabla_x(- {\Delta_{\mathbb{H}^n}} + \gamma I)^{-1} \hat{u} \right](s)}_{L^{\frac{p}{2}}(\mathbb{H}^n)}ds\cr
&&+\int_0^t \norm{\nabla_x \cdot e^{(t-s) {\Delta_{\mathbb{H}^n}}}f(s)}_{L^{\frac{p}{2}}(\mathbb{H}^n)}ds\cr
&\leq& \hat C e^{-t\gamma_{p/2,p/2}}\norm{u_0}_{L^{\frac{p}{2}}(\mathbb{H}^n)} \cr
&& + \alpha \int_0^t [h_n(t-s)]^{\frac{2}{p}}e^{-(t-s)\beta}\norm{\left[ \hat u\nabla_x(- {\Delta_{\mathbb{H}^n}} + \gamma I)^{-1}\hat u \right](s)}_{L^{\frac{pn}{4n-p}}(\mathbb{H}^n)}ds\cr
&&+ \int_0^t [h_n(t-s)]^{\frac{1}{p}+\frac{1}{n}}e^{-(t-s)\hat\beta}\norm{f(s)}_{L^{\frac{p}{3}}(\mathbb{H}^n)}ds\cr
&\leq& \hat C e^{-t\gamma_{p/2,p/2}}\norm{u_0}_{L^{\frac{p}{2}}(\mathbb{H}^n)} \cr&& + \alpha \int_0^t [h_n(t-s)]^{\frac{2}{p}}e^{-(t-s)\beta}\norm{\hat u(s)}_{L^{\frac{p}{2}}(\mathbb{H}^n)}  \norm{\left[\nabla_x(- {\Delta_{\mathbb{H}^n}} + \gamma I)^{-1}\hat u \right](s)}_{L^{\frac{pn}{2n-p}}(\mathbb{H}^n)}ds\cr
&&+ \int_0^t [h_n(t-s)]^{\frac{1}{p}+\frac{1}{n}}e^{-(t-s)\hat\beta}\norm{f(s)}_{L^{\frac{p}{3}}(\mathbb{H}^n)}ds\cr
&\leq& \hat C e^{-t\gamma_{p/2,p/2}}\norm{u_0}_{L^{\frac{p}{2}}(\mathbb{H}^n)}\cr
&& +\alpha \tilde C^{\frac{2}{p}} C{k(\gamma)} \int_0^t \left( (t-s)^{-\frac{n}{p}} +1 \right) e^{-(t-s)\beta} \norm{\hat u}_{ L^{\frac{p}{2}}(\mathbb{H}^n))}ds \norm{\hat u}_{C_b(\r_+, L^{\frac{p}{2}}(\mathbb{H}^n))}\cr
&&+ \int_0^t [h_n(t-s)]^{\frac{1}{p}+\frac{1}{n}}e^{-(t-s)\hat\beta}\norm{f(s)}_{L^{\frac{p}{3}}(\mathbb{H}^n)}ds\cr
&\leq& \hat C e^{-t\gamma_{p/2,p/2}}\norm{u_0}_{L^{\frac{p}{2}}(\mathbb{H}^n)} +\alpha \tilde C^{\frac{2}{p}} C{k(\gamma)} \rho \int_0^t \left( (t-s)^{-\frac{n}{p}} +1 \right) e^{-(t-s)\beta} \norm{\hat u}_{ L^{\frac{p}{2}}(\mathbb{H}^n))}ds  \cr
&&+ \int_0^t [h_n(t-s)]^{\frac{1}{p}+\frac{1}{n}}e^{-(t-s)\hat\beta}\norm{f(s)}_{L^{\frac{p}{3}}(\mathbb{H}^n)}ds.
\end{eqnarray}
Setting $y(t) = e^{\sigma t}\norm{\hat{u}(t)}_{L^{\frac{p}{2}}(\mathbb{H}^n)}$, where $\sigma= \min\left\{ \gamma_{p/2,p/2},\, \beta,\, \hat \beta \right\}$. The inequality \eqref{Gron} leads to
\begin{eqnarray}\label{Gron1}
y(t) \leq \hat C\norm{u_0}_{L^{\frac{p}{2}}(\mathbb{H}^n)} &+& \int_0^t [h_n(t-s)]^{\frac{1}{p}+\frac{1}{n}}e^{-(t-s)(\hat \beta-\sigma)}ds\sup_{t>0} e^{\sigma t} \norm{f(t)}_{L^{\frac{p}{3}}(\mathbb{H}^n)} \cr
&+&  \alpha \tilde C^{\frac{2}{p}} C {k(\gamma)} \rho  \int_0^t \left( (t-s)^{-\frac{n}{p}} +1 \right) e^{-(t-s)(\beta-\sigma)} y(s)ds.
\end{eqnarray}
Since the boundedness of integrals 
$$\int_0^t [h_n(t-s)]^{\frac{1}{p}+\frac{1}{n}}e^{-(t-s)(\hat \beta-\sigma)}ds<  \frac{\tilde C^{\frac{1}{p}+\frac{1}{n}}}{(\hat \beta-\sigma)^{\frac{1}{2} - \frac{n}{2p}}}{\bf\Gamma}\left( \frac{1}{2} - \frac{n}{2p} \right) + \frac{\tilde C^{\frac{1}{p}+\frac{1}{n}}}{\hat\beta-\sigma}=\hat {D}<+\infty,$$
and 
$$ \int_0^t \left( (t-s)^{-\frac{n}{p}} +1 \right) e^{-(t-s)(\beta-\sigma)}ds<\left( \frac{1}{(\beta-\sigma)^{1- \frac{n}{p}}}\Gamma\left(1- \frac{n}{p} \right) + \frac{1}{\beta-\sigma} \right)=\tilde D<+\infty,   $$
utilizing Gronwall's inequality, we obtain from inequality \eqref{Gron1} that
\begin{equation}
y(t)\leq \left(\hat C \norm{u_0}_{L^{\frac{p}{2}}(\mathbb{H}^n)}+ \hat{D}\sup_{t>0}e^{\sigma t}\norm{f(t)}_{L^{\frac{p}{3}}(\mathbb{H}^n)} \right) e^{\alpha \tilde C^{\frac{2}{p}} C {k(\gamma)} \rho  \tilde{D}}
\end{equation}
for all $t>0$. This yields the exponential decay \eqref{exponentialdecay}.
Our proof is completed.
\end{proof}

\end{document}